\definecolor{labelkey}{rgb}{0,0.08,0.45}
\definecolor{refkey}{rgb}{0,0.6,0.0}
\definecolor{Brown}{rgb}{0.45,0.0,0.05}
\definecolor{dgreen}{rgb}{0.00,0.49,0.00}
\definecolor{dblue}{rgb}{0,0.08,0.75}
\title{\sffamily MONOTONE OPERATOR METHODS FOR\\
NASH EQUILIBRIA IN NON-POTENTIAL GAMES
\footnote{Contact author: 
P. L. Combettes, {\ttfamily plc@math.jussieu.fr},
phone: +33 1 4427 6319, fax: +33 1 4427 7200.
This work was supported by the Agence Nationale de la Recherche under
grant ANR-08-BLAN-0294-02.}}
\author{Luis M. Brice\~{n}o-Arias and 
Patrick L. Combettes
\\[5mm]
\small UPMC Universit\'e Paris 06\\
\small Laboratoire Jacques-Louis Lions -- UMR 7598\\
\small 75005 Paris, France \\
\small (lbriceno@math.jussieu.fr, 
plc@math.jussieu.fr)
}
\date{~}
\newcommand{\scal}[2]{{\left\langle{{#1}\mid{#2}}\right\rangle}}
\newcommand{\pscal}[2]{\langle\langle{#1}\mid{#2}\rangle\rangle} 
\newcommand{\Menge}[2]{\Bigg\{{#1}~\bigg |~{#2}\Bigg\}} 
\newcommand{\menge}[2]{\big\{{#1}~\big |~{#2}\big\}} 
\newcommand{\minimize}[2]{\ensuremath{\underset{\substack{{#1}}}%
{\mathrm{minimize}}\;\;#2 }}
\newcommand{\argmin}[2]{\ensuremath{\underset{\substack{{#1}}}%
{\mathrm{argmin}}\;\;#2 }}
\newcommand{\Argmin}[2]{\ensuremath{\underset{\substack{{#1}}}%
{\mathrm{Argmin}}\;\;#2 }}
\newcommand{\Argmax}[2]{\ensuremath{\underset{\substack{{#1}}}%
{\mathrm{Argmax}}\;\;#2 }}
\newcommand{\HH}{\ensuremath{{\mathcal H}}}
\newcommand{\HHH}{\ensuremath{\boldsymbol{\mathcal H}}}
\newcommand{\GG}{\ensuremath{{\mathcal G}}}
\newcommand{\emp}{\ensuremath{{\varnothing}}}
\newcommand{\zer}{\ensuremath{\operatorname{zer}}}
\newcommand{\nabli}[1]{\nabla_{\!{#1}}\,}
\newcommand{\Id}{\ensuremath{\operatorname{Id}}\,}
\newcommand{\RR}{\ensuremath{\mathbb{R}}}
\newcommand{\RP}{\ensuremath{\left[0,+\infty\right[}}
\newcommand{\RPP}{\ensuremath{\left]0,+\infty\right[}}
\newcommand{\RPX}{\ensuremath{\left[0,+\infty\right]}}
\newcommand{\RX}{\ensuremath{\left]-\infty,+\infty\right]}}
\newcommand{\NN}{\ensuremath{\mathbb N}}
\newcommand{\gr}{\ensuremath{\operatorname{gra}}}
\newcommand{\exi}{\ensuremath{\exists\,}}
\newcommand{\pinf}{\ensuremath{{+\infty}}}
\newcommand{\weakly}{\ensuremath{\:\rightharpoonup\:}}
\newcommand{\dom}{\ensuremath{\operatorname{dom}}}
\newcommand{\ran}{\ensuremath{\operatorname{ran}}}
\newcommand{\prox}{\ensuremath{\operatorname{prox}}}
\newtheorem{theorem}{Theorem}[section]
\newtheorem{proposition}[theorem]{Proposition}
\theoremstyle{plain}{\theorembodyfont{\rmfamily}%
}
\theoremstyle{plain}{\theorembodyfont{\rmfamily}%
}
\theoremstyle{plain}{\theorembodyfont{\rmfamily}%
\newtheorem{example}[theorem]{Example}}
\theoremstyle{plain}{\theorembodyfont{\rmfamily}%
}
\theoremstyle{plain}{\theorembodyfont{\rmfamily}%
}
\theoremstyle{plain}{\theorembodyfont{\rmfamily}%
\newtheorem{problem}[theorem]{Problem}}
\numberwithin{equation}{section}
\begin{document}
\maketitle

\begin{abstract}
We observe that a significant class of Nash equilibrium problems
in non-potential games can be associated with monotone inclusion
problems. We propose splitting techniques to solve such problems 
and establish their convergence.
Applications to generalized Nash equilibria, 
zero-sum games, and cyclic proximity problems are demonstrated.
\end{abstract}

{\bf \small Keywords:} {\small monotone operator, Nash equilibrium,
potential game, proximal algorithm, splitting method, zero-sum game.}

\newpage
\section{Problem statement}
\label{sec:1}
Consider a game with $m\geq2$ players indexed by 
$i\in\{1,\ldots,m\}$.
The strategy $x_i$ of the $i$th player lies in a real Hilbert space
$\HH_i$ and the problem is to find $x_1\in\HH_1,\ldots,x_m\in\HH_m$ 
such that
\begin{equation}
\label{e:BR210}
(\forall i\in\{1,\ldots,m\})\:\:\: x_i\in\Argmin{x\in\HH_i}
{\!\!\boldsymbol{f}(x_1,\ldots,x_{i-1},x,x_{i+1},\ldots,x_m)+
\boldsymbol{g}_i(x_1,\ldots,x_{i-1},x,x_{i+1},\ldots,x_m)},
\end{equation}
where $(\boldsymbol{g}_i)_{1\leq i\leq m}$ represents the individual 
penalty of player $i$ depending on the strategies of all players and 
$\boldsymbol{f}$ is a convex penalty which is common to all 
players and models the collective discomfort of the group.
At this level of generality, no reliable method exists for solving 
\eqref{e:BR210} and some hypotheses are required. In this paper we 
focus on the following setting.

\begin{problem}
\label{prob:main}
Let $m\geq2$ be an integer and let 
$\boldsymbol{f}\colon\HH_1\oplus\cdots\oplus\HH_m\to\RX$ be a 
proper lower semicontinuous convex function. For every 
$i\in\{1,\ldots,m\}$, let 
$\boldsymbol{g}_i\colon\HH_1\oplus\cdots\oplus\HH_m\to\RX$
be such that, for every $(x_1,\ldots,x_m)\in
\HH_1\oplus\cdots\oplus\HH_m$, the function 
$x\mapsto\boldsymbol{g}_i(x_1,\ldots,x_{i-1},x,x_{i+1},\ldots,x_m)$
is convex and differentiable on $\HH_i$, and denote by 
$\nabli{i}\boldsymbol{g}_i(x_1,\ldots,x_m)$ its derivative 
at $x_i$. Moreover, 
\begin{multline}
\label{e:Bmon}
\big(\forall (x_1,\ldots,x_m)\in\HH_1\oplus\cdots\oplus\HH_m\big)
\big(\forall (y_1,\ldots,y_m)\in\HH_1\oplus\cdots\oplus\HH_m\big)\\
\sum_{i=1}^m\scal{\nabli{i}\boldsymbol{g}_i
(x_1,\ldots,x_m)-\nabli{i}\boldsymbol{g}_i
(y_1,\ldots,y_m)}{x_i-y_i}\geq0.
\end{multline} 
The problem is to find $x_1\in\HH_1,\ldots,x_m\in\HH_m$
such that
\begin{equation}
\label{e:BR21}
\begin{cases}
\:x_1&\hskip -.3cm
\in\Argmin{x\in\HH_1}{\boldsymbol{f}(x,x_2,\ldots,x_m)
+\boldsymbol{g}_1(x,x_2,\ldots,x_m)}\\
&\hskip -.25cm\vdots\\
x_m&\hskip -.3cm\in\Argmin{x\in\HH_m}{ 
\boldsymbol{f}(x_1,\ldots, x_{m-1},x)
+\boldsymbol{g}_m(x_1,\ldots,x_{m-1},x)}.
\end{cases}
\end{equation}
\end{problem}

In the special case when, for every $i\in\{1,\ldots,m\}$, 
$\boldsymbol{g}_i=\boldsymbol{g}$, Problem~\ref{prob:main} amounts 
to finding a Nash equilibrium of a potential game, i.e., a game in 
which the penalty of player $i$ can be represented by a common 
potential $\boldsymbol{f}+\boldsymbol{g}$ \cite{Mond96}. 
Hence, Nash equilibria can be found by solving
\begin{equation}
\label{e:potent}
\minimize{x_1\in\HH_1,\ldots,x_m\in\HH_m}
{\boldsymbol{f}(x_1,\ldots,x_m)+
\boldsymbol{g}(x_1,\ldots,x_m)}.
\end{equation}
Thus, the problem reduces to the minimization of the sum of two 
convex
functions on the Hilbert space $\HH_1\oplus\cdots\oplus\HH_m$ and
various methods are available to tackle it under suitable assumptions
(see for instance \cite[Chapter~27]{Livre1}). In this paper we 
address the more challenging non-potential setting, in which the 
functions $(\boldsymbol{g}_i)_{1\leq i\leq m}$ need not be 
identical nor convex, but they must satisfy \eqref{e:Bmon}. Let us 
note that \eqref{e:Bmon} actually implies the convexity of 
$\boldsymbol{g}_i$ with respect to its $i$th variable. 

Our methodology consists in using monotone operator splitting 
techniques for solving an auxiliary monotone inclusion, the 
solutions of which are Nash equilibria of Problem~\ref{prob:main}.
In Section~\ref{sec:2} we review the notation and background
material needed subsequently. In Section~\ref{sec:3} we introduce
the auxiliary monotone inclusion problem and provide conditions 
ensuring the existence of solutions to the auxiliary 
problem. We also propose two methods for solving 
Problem~\ref{prob:main} and establish their convergence. 
Finally, in Section~\ref{sec:4} the proposed methods are applied to 
the construction of generalized Nash equilibria, to zero-sum games, 
and to cyclic proximation problems.

\section{Notation and background}
\label{sec:2}
Throughout this paper, $\HH$, $\GG$, and $(\HH_i)_{1\leq i\leq m}$
are real Hilbert spaces. For convenience, their scalar products are 
all denoted by $\scal{\cdot}{\cdot}$ and the associated norms by 
$\|\cdot\|$. Let $A\colon\HH\to 2^{\HH}$ be a set-valued operator.
The domain of $A$ is $\dom A=\menge{x\in\HH}{Ax\neq\emp}$,
the set of zeros of $A$ is $\zer A=\menge{x\in\HH}{0\in Ax}$, the 
graph of $A$ is $\gr A=\menge{(x,u)\in\HH\times\HH}{u\in Ax}$, the 
range of $A$ is $\ran A=\menge{u\in\HH}{(\exi x\in\HH)\;u\in Ax}$, 
the inverse of $A$ is the set-valued operator 
$A^{-1}\colon\HH\to 2^{\HH}\colon u\mapsto\menge{x\in\HH}{u\in Ax}$, 
and the resolvent of $A$ is $J_A=(\Id+A)^{-1}$. In addition, 
$A$ is monotone if 
\begin{equation}
(\forall (x,y)\in\HH\times\HH)(\forall (u,v)\in Ax\times Ay)\quad
\scal{x-y}{u-v}\geq 0
\end{equation}
and it is maximally monotone if, furthermore, every monotone
operator $B\colon\HH\to 2^{\HH}$ such that $\gr A\subset\gr B$
coincides with $A$.

We denote by $\Gamma_0(\HH)$ the class of lower semicontinuous convex 
functions $\varphi\colon\HH\to\RX$ which are proper in the sense that
$\dom\varphi=\menge{x\in\HH}{\varphi(x)<\pinf}\neq\emp$. 
Let $\varphi\in\Gamma_0(\HH)$. The proximity operator 
of $\varphi$ is 
\begin{equation}
\label{e:prox}
\prox_{\varphi}\colon\HH\to\HH\colon x\mapsto\argmin{y\in\HH}
{\varphi(y)+\frac12\|x-y\|^2}
\end{equation}
and the subdifferential of $\varphi$ is the maximally monotone 
operator
\begin{equation}
\label{e:subdiff}
\partial\varphi\colon\HH\to 2^{\HH}\colon x\mapsto
\menge{u\in\HH}{(\forall y\in\HH)\;\;\scal{y-x}{u}+\varphi(x)
\leq \varphi(y)}.
\end{equation}
We have
\begin{equation}
\label{e:fermat}
\Argmin{x\in\HH}{\varphi(x)}=\zer\partial\varphi\quad\text{and}\quad 
\prox_{\varphi}=J_{\partial\varphi}.
\end{equation}
Let $\beta\in\RPP$. An operator $T\colon\HH\to\HH$ is 
$\beta$-cocoercive (or $\beta T$ is firmly nonexpansive) if 
\begin{equation}
\label{e:coco}
(\forall x\in\HH)(\forall y\in\HH)\quad\scal{x-y}{Tx-Ty}
\geq\beta\|Tx-Ty\|^2,
\end{equation}
which implies that it is monotone and $\beta^{-1}$--Lipschitzian.
Let $C$ be a nonempty convex subset of $\HH$. 
The indicator function of $C$ is
\begin{equation}
\label{e:iota}
\iota_C\colon\HH\to\RX\colon x\mapsto
\begin{cases}
0,&\text{if}\;\;x\in C;\\
\pinf,&\text{if}\;\;x\notin C
\end{cases}
\end{equation}
and $\partial \iota_C=N_C$ is the normal cone
operator of $C$, i.e.,
\begin{equation}
N_C\colon\HH\to 2^{\HH}\colon x\mapsto
\begin{cases}
\menge{u\in\HH}{(\forall y\in C)\:\:\scal{y-x}{u}\leq0},
&\text{if }x\in C;\\
\emp,&\text{otherwise}.
\end{cases}
\end{equation}
If $C$ is closed, for every $x\in\HH$, 
there exists a unique point $P_Cx\in C$ such that 
$\|x-P_Cx\|=\inf_{y\in C}\|x-y\|$; 
$P_Cx$ is called the projection of $x$ onto $C$ and we have 
$P_C=\prox_{\iota_C}$.
In addition, the symbols $\weakly$ and $\to$ denote respectively 
weak and strong convergence. For a detailed account of the tools 
described above, see \cite{Livre1}.

\section{Model, algorithms, and convergence}
\label{sec:3}
We investigate an auxiliary monotone inclusion problem the solutions 
of which are Nash equilibria of Problem~\ref{prob:main} and propose 
two splitting methods to solve it. Both involve the proximity 
operator $\prox_{\boldsymbol{f}}$, which can be computed explicitly 
in several instances \cite{Livre1,Jmiv1}.
We henceforth denote by $\HHH$ the direct sum of the Hilbert 
spaces $(\HH_i)_{1\leq i\leq m}$, i.e., the product space
$\HH_1\times\cdots\times\HH_m$ equipped with the 
scalar product
\begin{equation}
\pscal{\cdot}{\cdot}\colon\big((x_i)_{1\leq i\leq m},
(y_i)_{1\leq i\leq m}\big)\mapsto\sum_{i=1}^m\scal{x_i}{y_i}.
\end{equation}
We denote the associated norm by $|||\cdot|||$, a generic 
element of $\HHH$ by $\boldsymbol{x}=(x_i)_{1\leq i\leq m}$,
and the identity operator on $\HHH$ by $\boldsymbol{\Id}$.

\subsection{A monotone inclusion model}

With the notation and hypotheses of Problem~\ref{prob:main}, 
let us set
\begin{equation}
\label{e:B}
\boldsymbol{A}=\partial\boldsymbol{f}\quad\text{and}\quad
\boldsymbol{B}\colon\HHH\to\HHH\colon
\boldsymbol{x}\mapsto\big(\nabli{1}\boldsymbol{g}_1
(\boldsymbol{x}),\ldots,\nabli{m}\boldsymbol{g}_m
(\boldsymbol{x})\big).
\end{equation}
We consider the inclusion problem
\begin{equation}
\label{e:probaux}
\text{find}\quad\boldsymbol{x}\in\zer(\boldsymbol{A}+\boldsymbol{B}).
\end{equation}
Since $\boldsymbol{f}\in\Gamma_0(\HHH)$,
$\boldsymbol{A}$ is maximally monotone. On the other hand,
it follows from \eqref{e:Bmon} that $\boldsymbol{B}$ is monotone.
The following result establishes a connection between the monotone 
inclusion problem \eqref{e:probaux} and Problem~\ref{prob:main}.

\begin{proposition}
\label{p:aux}
Using the notation and hypotheses of Problem~\ref{prob:main},
let $\boldsymbol{A}$ and $\boldsymbol{B}$ be as in \eqref{e:B}. 
Then every point in $\zer(\boldsymbol{A}+\boldsymbol{B})$ is a 
solution to Problem~\ref{prob:main}.
\end{proposition}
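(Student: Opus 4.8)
The plan is to unwind the inclusion $\boldsymbol{0}\in\boldsymbol{A}\boldsymbol{x}+\boldsymbol{B}\boldsymbol{x}$ into the subdifferential inequality for $\boldsymbol{f}$ and then exploit it one coordinate at a time. Fix $\boldsymbol{x}=(x_i)_{1\leq i\leq m}\in\zer(\boldsymbol{A}+\boldsymbol{B})$. Since $\boldsymbol{A}=\partial\boldsymbol{f}$ and $\boldsymbol{B}\boldsymbol{x}=(\nabli{i}\boldsymbol{g}_i(\boldsymbol{x}))_{1\leq i\leq m}$, this inclusion is equivalent to $-\boldsymbol{B}\boldsymbol{x}\in\partial\boldsymbol{f}(\boldsymbol{x})$, which by \eqref{e:subdiff} reads
\[
(\forall\boldsymbol{y}=(y_i)_{1\leq i\leq m}\in\HHH)\qquad
\boldsymbol{f}(\boldsymbol{x})-\sum_{i=1}^m\scal{y_i-x_i}{\nabli{i}\boldsymbol{g}_i(\boldsymbol{x})}\leq\boldsymbol{f}(\boldsymbol{y}).
\]

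First I would fix $i\in\{1,\ldots,m\}$ and specialize this inequality to the points $\boldsymbol{y}=(x_1,\ldots,x_{i-1},y,x_{i+1},\ldots,x_m)$ with $y\in\HH_i$ arbitrary; all summands but the $i$th vanish, leaving
\[
\boldsymbol{f}(x_1,\ldots,x_m)\leq\boldsymbol{f}(x_1,\ldots,x_{i-1},y,x_{i+1},\ldots,x_m)+\scal{y-x_i}{\nabli{i}\boldsymbol{g}_i(\boldsymbol{x})}.
\]
Next I would invoke the standing hypothesis of Problem~\ref{prob:main} that $y\mapsto\boldsymbol{g}_i(x_1,\ldots,x_{i-1},y,x_{i+1},\ldots,x_m)$ is convex and differentiable on $\HH_i$ with derivative $\nabli{i}\boldsymbol{g}_i(\boldsymbol{x})$ at $x_i$; the gradient inequality for convex functions then gives
\[
\scal{y-x_i}{\nabli{i}\boldsymbol{g}_i(\boldsymbol{x})}\leq\boldsymbol{g}_i(x_1,\ldots,x_{i-1},y,x_{i+1},\ldots,x_m)-\boldsymbol{g}_i(x_1,\ldots,x_m).
\]
Adding the last two displays and cancelling the common term $\scal{y-x_i}{\nabli{i}\boldsymbol{g}_i(\boldsymbol{x})}$ and the common term $\boldsymbol{g}_i(x_1,\ldots,x_m)$ yields, for every $y\in\HH_i$,
\[
\boldsymbol{f}(x_1,\ldots,x_m)+\boldsymbol{g}_i(x_1,\ldots,x_m)\leq\boldsymbol{f}(x_1,\ldots,x_{i-1},y,x_{i+1},\ldots,x_m)+\boldsymbol{g}_i(x_1,\ldots,x_{i-1},y,x_{i+1},\ldots,x_m),
\]
which is precisely the $i$th inclusion in \eqref{e:BR21}. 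Since $i\in\{1,\ldots,m\}$ was arbitrary, $\boldsymbol{x}$ solves Problem~\ref{prob:main}.

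There is no genuine obstacle here; the proof is essentially a bookkeeping argument. The one point to keep straight is which coordinate is being perturbed, and the conceptual observation driving the argument is that the linear term $\scal{y-x_i}{\nabli{i}\boldsymbol{g}_i(\boldsymbol{x})}$ generated by the subgradient $-\boldsymbol{B}\boldsymbol{x}$ of $\boldsymbol{f}$ is exactly what is absorbed by the convexity of $\boldsymbol{g}_i$ in its $i$th argument. Note that the monotonicity hypothesis \eqref{e:Bmon} is not needed for this implication; only the partial convexity of each $\boldsymbol{g}_i$ (which \eqref{e:Bmon} happens to entail) is used, so the converse implication need not hold in general.
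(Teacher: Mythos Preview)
Your proof is correct. It follows the same logical skeleton as the paper's argument but is more elementary: where the paper invokes \cite[Proposition~16.6]{Livre1} to obtain the inclusion $\partial\boldsymbol{f}(\boldsymbol{x})\subset\partial\big(\boldsymbol{f}(\cdot,x_2,\ldots,x_m)\big)(x_1)\times\cdots\times\partial\big(\boldsymbol{f}(x_1,\ldots,x_{m-1},\cdot)\big)(x_m)$ and then \cite[Corollary~16.38(iii)]{Livre1} (the sum rule $\partial(\varphi+\psi)=\partial\varphi+\nabla\psi$ for $\psi$ differentiable) together with \eqref{e:fermat} to pass to the minimization, you unpack both steps directly from the definitions of subdifferential and convexity. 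Your approach is self-contained and makes transparent exactly which hypotheses are used (in particular that \eqref{e:Bmon} plays no role here beyond implying partial convexity); the paper's approach is terser and highlights that the argument is an instance of standard subdifferential calculus.
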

\begin{proof}
Suppose that $\zer(\boldsymbol{A}+\boldsymbol{B})\neq\emp$ and
let $(x_1,\ldots,x_m)\in\HHH$. Then
\cite[Proposition~16.6]{Livre1} asserts that
\begin{equation}
\label{e:auxil}
\boldsymbol{A}(x_1,\ldots,x_m)\subset\partial
\big(\boldsymbol{f}(\cdot,x_2,\ldots,x_m)\big)(x_1)\times\cdots
\times\partial\big(\boldsymbol{f}(x_1,\ldots,x_{m-1},
\cdot)\big)(x_m).
\end{equation}
Hence, since $\dom\boldsymbol{g}_1(\cdot,x_2,\ldots,x_m)=\HH_1$,
\ldots, $\dom\boldsymbol{g}_m(x_1,\ldots,x_{m-1},\cdot)=\HH_m$, we 
derive from \eqref{e:B}, \eqref{e:fermat}, and 
\cite[Corollary~16.38(iii)]{Livre1} that
\begin{align}
\label{e:auxil3}
(x_1,\ldots,x_m)\in\zer(\boldsymbol{A}+\boldsymbol{B})
\quad&\Leftrightarrow\quad
-\boldsymbol{B}(x_1,\ldots,x_m)\in\boldsymbol{A}(x_1,\ldots,x_m)
\nonumber\\
&\Rightarrow\quad
\begin{cases}
\:\:-\nabli{1}\boldsymbol{g}_1(x_1,\ldots,x_m)&\hskip -.3cm\in\,
\partial\big(\boldsymbol{f}(\cdot,x_2,\ldots,x_m)\big)(x_1)\\
&\hskip -.3cm\:\vdots\\
-\nabli{m}\boldsymbol{g}_m(x_1,\ldots,x_m)&\hskip -.3cm\in\,
\partial\big(\boldsymbol{f}(x_1,\ldots,x_{m-1},\cdot)\big)(x_m)\\
\end{cases}\nonumber\\
&\Leftrightarrow\quad(x_1,\ldots,x_m)\:\:\:\text{solves
Problem~\ref{prob:main}},
\end{align}
which yields the result.
\end{proof}

Proposition~\ref{p:aux} asserts that we can solve 
Problem~\ref{prob:main} 
by solving \eqref{e:probaux}, provided the latter has solutions. 
The following result provides instances in which this property is 
satisfied. First, we need the following definitions (see 
\cite[Chapters~21--24]{Livre1}).

Let $A\colon\HH\to 2^{\HH}$ be monotone. Then $A$ is $3^*$ monotone
if $\dom A\times\ran A\subset\dom F_A$, where 
\begin{equation}
F_A\colon\HH\times\HH\to\RX\colon (x,u)\mapsto\scal{x}{u}-
\inf_{(y,v)\in\gr A}\scal{x-y}{u-v}.
\end{equation}
On the other hand, $A$ is uniformly monotone if there exists 
an increasing function $\phi\colon\RP\to\RPX$ vanishing only at $0$
such that
\begin{equation}
\label{e:2011-05-24a}
\big(\forall (x,y)\in\HH\times\HH\big)\big(\forall(u,v)\in 
Ax\times Ay\big)\quad
\scal{x-y}{u-v}\geq\phi(\|x-y\|). 
\end{equation}
A function $\varphi\in\Gamma_0(\HH)$ is uniformly convex if there 
exists an increasing function $\phi\colon\RP\to\RPX$ vanishing only 
at $0$ such that
\begin{multline}
\label{e:2011-05-24b}
(\forall (x,y)\in\dom\varphi\times\dom\varphi)
(\forall\alpha\in\left]0,1\right[)\\
\varphi(\alpha x+(1-\alpha)y)+\alpha(1-\alpha)\phi(\|x-y\|)\leq
\alpha\varphi(x)+(1-\alpha)\varphi(y). 
\end{multline}
The function $\phi$ in \eqref{e:2011-05-24a} and 
\eqref{e:2011-05-24b} is called the modulus
of uniform monotonicity and of uniform convexity, respectively,
and it is said to be supercoercive if
$\lim_{t\to\pinf}\phi(t)/t=\pinf$.

\begin{proposition}
\label{p:ex}
With the notation and hypotheses of Problem~\ref{prob:main}, 
let $\boldsymbol{B}$ be as in \eqref{e:B}.
Suppose that $\boldsymbol{B}$ is maximally monotone and that 
one of the following holds.
\begin{enumerate}
\item
\label{p:exi} 
$\lim_{|||\boldsymbol{x}|||\to\pinf}
\inf|||\partial\boldsymbol{f}(\boldsymbol{x})+\boldsymbol{B}
\boldsymbol{x}|||=\pinf$. 
\item\label{p:exii} $\partial\boldsymbol{f}+\boldsymbol{B}$ is 
uniformly monotone with a supercoercive modulus.
\item
\label{p:exvii-} 
$(\dom\partial\boldsymbol{f})\cap\dom {\boldsymbol B}$ is bounded.
\item
\label{p:exvii}  
$\boldsymbol{f}=\iota_{\boldsymbol{C}}$, where 
$\boldsymbol{C}$ is a nonempty closed convex bounded subset of 
$\HHH$.
\item
\label{p:exviii} 
$\boldsymbol{f}$ is uniformly convex with a supercoercive modulus.
\item
\label{p:exiii} 
$\boldsymbol{B}$ is $3^*$ monotone, and $\partial\boldsymbol{f}$ or 
$\boldsymbol{B}$ is surjective.
\item
\label{p:exiv} 
$\boldsymbol{B}$ is uniformly monotone with a supercoercive modulus.
\item
\label{p:exv} 
$\boldsymbol{B}$ is linear and bounded, there 
exists $\beta\in\RPP$ such that $\boldsymbol{B}$ is 
$\beta$--cocoercive,
and $\partial\boldsymbol{f}$ or $\boldsymbol{B}$ is surjective.
\end{enumerate}
Then $\zer(\partial\boldsymbol{f}+\boldsymbol{B})\neq\emp$. In 
addition, if \ref{p:exii}, \ref{p:exviii}, or \ref{p:exiv} holds, 
$\zer(\partial\boldsymbol{f}+\boldsymbol{B})$ is a singleton.
\end{proposition}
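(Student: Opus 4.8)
The plan is to reduce each item to a standard existence result for zeros of a sum $\boldsymbol{A}+\boldsymbol{B}$ of maximally monotone operators. Since $\boldsymbol{A}=\partial\boldsymbol{f}$ is maximally monotone (as $\boldsymbol{f}\in\Gamma_0(\HHH)$) and $\boldsymbol{B}$ is assumed maximally monotone, the key hurdle is that the sum of two maximally monotone operators need not be maximally monotone without a qualification condition; however, $\boldsymbol{B}$ here is single-valued with full domain $\HHH$, so $\dom\boldsymbol{B}=\HHH$ and the sum rule applies (e.g., \cite[Corollary~25.5]{Livre1}), whence $\boldsymbol{A}+\boldsymbol{B}$ is maximally monotone. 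A maximally monotone operator $M$ satisfies $\zer M\neq\emp$ as soon as $M$ is surjective, or more weakly as soon as $M^{-1}$ is locally bounded at $0$; and a maximally monotone operator is surjective whenever it is coercive in the sense $\lim_{|||\boldsymbol{x}|||\to\pinf}\inf|||M\boldsymbol{x}|||/|||\boldsymbol{x}|||>0$, and even the weaker condition $\lim_{|||\boldsymbol{x}|||\to\pinf}\inf|||M\boldsymbol{x}|||=\pinf$ forces $0\in\ran M$. This is exactly item \ref{p:exi}, so \ref{p:exi} is the base case and I would cite the corresponding coercivity/surjectivity result in \cite{Livre1} (around Corollaries~21.20--21.24).

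Next I would show that each of the remaining hypotheses implies \ref{p:exi} or directly gives $\zer(\boldsymbol{A}+\boldsymbol{B})\neq\emp$ via a companion criterion. For \ref{p:exii}: if $\boldsymbol{A}+\boldsymbol{B}$ is uniformly monotone with supercoercive modulus $\phi$, then fixing any $\boldsymbol{y}$ with $\boldsymbol{v}\in(\boldsymbol{A}+\boldsymbol{B})\boldsymbol{y}$ gives $|||\boldsymbol{u}|||\geq|||\boldsymbol{u}-\boldsymbol{v}|||-|||\boldsymbol{v}|||\geq\phi(|||\boldsymbol{x}-\boldsymbol{y}|||)/|||\boldsymbol{x}-\boldsymbol{y}|||-|||\boldsymbol{v}|||\to\pinf$ as $|||\boldsymbol{x}|||\to\pinf$ for $\boldsymbol{u}\in(\boldsymbol{A}+\boldsymbol{B})\boldsymbol{x}$, so \ref{p:exi} holds; this simultaneously yields coercivity of the (maximally monotone, hence here surjective) sum and, since uniform monotonicity implies strict monotonicity, injectivity of the inverse, giving the singleton claim. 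For \ref{p:exvii-} and \ref{p:exvii}: boundedness of $(\dom\partial\boldsymbol{f})\cap\dom\boldsymbol{B}=(\dom\partial\boldsymbol{f})\cap\HHH=\dom\partial\boldsymbol{f}$ (resp. $\boldsymbol{C}$, noting $\boldsymbol{f}=\iota_{\boldsymbol{C}}\in\Gamma_0(\HHH)$ with $\partial\iota_{\boldsymbol{C}}=N_{\boldsymbol{C}}$ and $\dom N_{\boldsymbol{C}}\subset\boldsymbol{C}$) means the maximally monotone operator $\boldsymbol{A}+\boldsymbol{B}$ has bounded domain, and a maximally monotone operator with bounded domain is surjective, hence has a zero; item \ref{p:exvii} is just the special case of \ref{p:exvii-} with $\boldsymbol{f}=\iota_{\boldsymbol{C}}$. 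For \ref{p:exviii}: a uniformly convex $\boldsymbol{f}$ with supercoercive modulus has $\partial\boldsymbol{f}$ uniformly monotone with supercoercive modulus, so this reduces to \ref{p:exii} (with $\boldsymbol{B}$ monotone added on, the sum stays uniformly monotone with the same modulus), again yielding the singleton.

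For \ref{p:exiii}: if $\boldsymbol{B}$ is $3^*$ monotone and one of $\boldsymbol{A}=\partial\boldsymbol{f}$, $\boldsymbol{B}$ is surjective, I would invoke the sum theorem for ranges of monotone operators — Brezis--Haraux type results (see \cite[Theorem~24.20 / Corollary~24.22]{Livre1}) — which give $\ran(\boldsymbol{A}+\boldsymbol{B})$ dense in, and with the same interior as, $\ran\boldsymbol{A}+\ran\boldsymbol{B}$; surjectivity of one summand forces $\ran(\boldsymbol{A}+\boldsymbol{B})=\HHH$ (using also that $\partial\boldsymbol{f}$ is $3^*$ monotone, being a subdifferential), in particular $0\in\ran(\boldsymbol{A}+\boldsymbol{B})$. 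For \ref{p:exiv}: uniform monotonicity with supercoercive modulus on $\boldsymbol{B}$ alone makes $\boldsymbol{B}$, hence $\boldsymbol{A}+\boldsymbol{B}$, coercive, so \ref{p:exi} holds, and strict monotonicity of $\boldsymbol{B}$ again gives the singleton. For \ref{p:exv}: a linear bounded $\beta$-cocoercive $\boldsymbol{B}$ is in particular $3^*$ monotone — this is a known fact for cocoercive operators (they are the firmly nonexpansive ones up to scaling, and such operators are $3^*$ monotone, cf. \cite[Proposition~24.~]{Livre1}) — so \ref{p:exv} reduces to \ref{p:exiii}. The main obstacle, and the one step where I would need to be careful about hypotheses, is \ref{p:exiii}: verifying that the Brezis--Haraux range condition genuinely applies requires that both $\boldsymbol{A}+\boldsymbol{B}$ be maximally monotone (handled above via $\dom\boldsymbol{B}=\HHH$) and that the relevant operators be $3^*$ monotone, so I would spell out that $\partial\boldsymbol{f}$ is automatically $3^*$ monotone and that $3^*$ monotonicity is preserved under the sum with a full-domain monotone operator, then conclude $\ran(\boldsymbol{A}+\boldsymbol{B})\supset\inte(\ran\boldsymbol{A}+\ran\boldsymbol{B})\ni 0$ from surjectivity of one summand.
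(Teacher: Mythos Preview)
Your proposal is correct and follows essentially the same route as the paper: first use $\dom\boldsymbol{B}=\HHH$ to get maximal monotonicity of $\partial\boldsymbol{f}+\boldsymbol{B}$, record that $\partial\boldsymbol{f}$ is always $3^*$ monotone, and then dispatch each item by the same reductions (\ref{p:exvii}$\Rightarrow$\ref{p:exvii-}, \ref{p:exviii}$\Rightarrow$\ref{p:exii}, \ref{p:exiv}$\Rightarrow$\ref{p:exii}, \ref{p:exv}$\Rightarrow$\ref{p:exiii}) and the same cited results from \cite{Livre1}. One small correction in your handling of \ref{p:exiii}: the aside that ``$3^*$ monotonicity is preserved under the sum with a full-domain monotone operator'' is neither standard nor needed---the Brezis--Haraux result you cite requires the two \emph{summands} to be $3^*$ monotone (which they are here, since $\partial\boldsymbol{f}$ always is and $\boldsymbol{B}$ is by hypothesis), not their sum, so you can drop that step entirely.
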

\begin{proof}
First note that, for every $\boldsymbol{x}=(x_i)_{1\leq i\leq m}
\in\HHH$, $\dom\nabli{1}\boldsymbol{g}_1(\cdot,x_2,\ldots,x_m)=\HH_1,
\ldots,$ 
$\dom\nabli{m}\boldsymbol{g}_m(x_1,\ldots,x_{m-1},\cdot)=\HH_m$.
Hence, it follows from \eqref{e:B} that $\dom\boldsymbol{B}=\HHH$ 
and, therefore, from \cite[Corollary~24.4(i)]{Livre1} that
$\partial\boldsymbol{f}+\boldsymbol{B}$ is maximally monotone. 
In addition, it follows from \cite[Example~24.9]{Livre1} that 
$\partial\boldsymbol{f}$ is $3^*$ monotone.

\ref{p:exi}: This follows from \cite[Corollary~21.20]{Livre1}.
\ref{p:exii}: This follows from \cite[Corollary~23.37(i)]{Livre1}.
\ref{p:exvii-}: 
Since $\dom(\partial\boldsymbol{f}+\boldsymbol{B})=
(\dom\partial\boldsymbol{f})\cap\dom {\boldsymbol B}$,
the result follows from \cite[Proposition~23.36(iii)]{Livre1}.
\ref{p:exvii}$\Rightarrow$\ref{p:exvii-}: $\boldsymbol{f}=
\iota_{\boldsymbol{C}}\in\Gamma_0(\HHH)$ and
$\dom\partial\boldsymbol{f}=\boldsymbol{C}$ is bounded.
\ref{p:exviii}$\Rightarrow$\ref{p:exii}: 
It follows from \eqref{e:B} and
\cite[Example~22.3(iii)]{Livre1} that $\partial\boldsymbol{f}$ is 
uniformly monotone. Hence, $\partial\boldsymbol{f}+\boldsymbol{B}$ 
is uniformly monotone.
\ref{p:exiii}: This follows from \cite[Corollary~24.22(ii)]{Livre1}.
\ref{p:exiv}$\Rightarrow$\ref{p:exii}: Clear.
\ref{p:exv}$\Rightarrow$\ref{p:exiii}: This follows from 
\cite[Proposition~24.12]{Livre1}.
\end{proof}

\subsection{Forward-backward-forward algorithm}
\label{ssec:1}
Our first method for solving Problem~\ref{prob:main} derives from
an algorithm proposed in \cite{Siopt3}, which is itself 
a variant of a method proposed in \cite{Tsen00}.
\begin{theorem}
\label{t:main}
In Problem~\ref{prob:main}, suppose that there exist 
$(z_1,\ldots,z_m)\in\HHH$ such that
\begin{equation}
\label{e:exist1}
-\big(\nabli{1}\boldsymbol{g}_1
(z_1,\ldots,z_m),\ldots,\nabli{m}\boldsymbol{g}_m
(z_1,\ldots,z_m)\big)\in\partial\boldsymbol{f}
(z_1,\ldots,z_m)
\end{equation}
and $\chi\in\RPP$ such that
\begin{multline}
\label{e:BLip}
(\forall (x_1,\ldots,x_m)\in\HHH)
(\forall (y_1,\ldots,y_m)\in\HHH)\\
\sum_{i=1}^m\|\nabli{i}\boldsymbol{g}_i
(x_1,\ldots,x_m)-\nabli{i}\boldsymbol{g}_i
(y_1,\ldots,y_m)\|^2\leq\chi^2\sum_{i=1}^m\|x_i-y_i\|^2.
\end{multline}
Let $\varepsilon\in\left]0,1/(\chi+1)\right[$ and 
let $(\gamma_n)_{n\in\NN}$ be a sequence in 
$\left[\varepsilon,(1-\varepsilon)/\chi\right]$. Moreover,
for every $i\in\{1,\ldots,m\}$, let $x_{i,0}\in \HH_i$, and
let $(a_{i,n})_{n\in\NN}$, $(b_{i,n})_{n\in\NN}$, and 
$(c_{i,n})_{n\in\NN}$ be absolutely summable sequences in $\HH_i$. 
Now consider the following routine. 
\begin{align}
\label{e:tseng1}
(\forall n\in\NN)\quad
&\left\lfloor 
\begin{array}{l}
\text{For}\:\:i=1,\ldots,m\\
\lfloor\:y_{i,n}=x_{i,n}-\gamma_n(\nabli{i}\boldsymbol{g}_i
(x_{1,n},\ldots,x_{m,n})
+a_{i,n})\\[2mm]
(p_{1,n},\ldots,p_{m,n})=\prox_{\gamma_n\boldsymbol{f}}
(y_{1,n},\ldots,y_{m,n})+(b_{1,n},\ldots,b_{m,n})\\[2mm]
\text{For}\:\:i=1,\ldots,m\\
\left\lfloor
\begin{array}{l}
q_{i,n}=p_{i,n}-\gamma_n(\nabli{i}\boldsymbol{g}_i
(p_{1,n},\ldots,p_{m,n})+c_{i,n})\\
x_{i,n+1}=x_{i,n}-y_{i,n}+q_{i,n}.
\end{array}
\right.
\end{array}
\right.
\end{align}
Then there exists a solution 
$(\overline{x}_1,\ldots,\overline{x}_m)$ to Problem~\ref{prob:main} 
such that, for every $i\in\{1,\ldots,m\}$,
$x_{i,n}\weakly\overline{x}_i$ and $p_{i,n}\weakly\overline{x}_i$.
\end{theorem}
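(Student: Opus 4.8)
The plan is to recognize the routine \eqref{e:tseng1} as a perturbed instance of Tseng's forward-backward-forward splitting method, applied to the monotone inclusion \eqref{e:probaux} on the product space $\HHH$, and then to invoke the convergence theorem from \cite{Siopt3}. The first step is to set $\boldsymbol{A}=\partial\boldsymbol{f}$ and $\boldsymbol{B}$ as in \eqref{e:B}, so that $\boldsymbol{A}$ is maximally monotone (since $\boldsymbol{f}\in\Gamma_0(\HHH)$) and $\boldsymbol{B}$ is monotone by \eqref{e:Bmon}. Hypothesis \eqref{e:BLip} says precisely that $\boldsymbol{B}$ is $\chi$-Lipschitzian on $\HHH$; being also monotone and everywhere defined, it is maximally monotone, so $\boldsymbol{A}+\boldsymbol{B}$ is a sum of maximally monotone operators with $\dom\boldsymbol{B}=\HHH$. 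Hypothesis \eqref{e:exist1} asserts exactly that $\zer(\boldsymbol{A}+\boldsymbol{B})\neq\emp$, which is the standing assumption needed for convergence.

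Next I would rewrite \eqref{e:tseng1} in the compact vector form on $\HHH$: with $\boldsymbol{x}_n=(x_{i,n})_{1\le i\le m}$, $\boldsymbol{a}_n=(a_{i,n})_i$, $\boldsymbol{b}_n=(b_{i,n})_i$, $\boldsymbol{c}_n=(c_{i,n})_i$, the updates read
\begin{equation*}
\boldsymbol{y}_n=\boldsymbol{x}_n-\gamma_n(\boldsymbol{B}\boldsymbol{x}_n+\boldsymbol{a}_n),\quad
\boldsymbol{p}_n=J_{\gamma_n\boldsymbol{A}}\boldsymbol{y}_n+\boldsymbol{b}_n,\quad
\boldsymbol{q}_n=\boldsymbol{p}_n-\gamma_n(\boldsymbol{B}\boldsymbol{p}_n+\boldsymbol{c}_n),\quad
\boldsymbol{x}_{n+1}=\boldsymbol{x}_n-\boldsymbol{y}_n+\boldsymbol{q}_n,
\end{equation*}
using $\prox_{\gamma_n\boldsymbol{f}}=J_{\gamma_n\partial\boldsymbol{f}}=J_{\gamma_n\boldsymbol{A}}$ from \eqref{e:fermat} and the fact that the componentwise derivatives assemble into $\boldsymbol{B}$. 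This is the exact template of the forward-backward-forward iteration with errors, where the error sequences $(\boldsymbol{a}_n)$, $(\boldsymbol{b}_n)$, $(\boldsymbol{c}_n)$ are absolutely summable because each of their components is (finite sums of absolutely summable sequences are absolutely summable). The stepsize condition $\gamma_n\in[\varepsilon,(1-\varepsilon)/\chi]$ with $\varepsilon\in\left]0,1/(\chi+1)\right[$ matches the requirement $\gamma_n\in[\varepsilon,(1-\varepsilon)/L]$ for an $L$-Lipschitzian $\boldsymbol{B}$; one should note $\varepsilon<(1-\varepsilon)/\chi$ holds iff $\varepsilon<1/(\chi+1)$, so the interval is nonempty.

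Having matched all hypotheses, the convergence theorem of \cite{Siopt3} yields a point $\overline{\boldsymbol{x}}\in\zer(\boldsymbol{A}+\boldsymbol{B})$ with $\boldsymbol{x}_n\weakly\overline{\boldsymbol{x}}$ and $\boldsymbol{p}_n\weakly\overline{\boldsymbol{x}}$ in $\HHH$. Weak convergence in the product space $\HHH$ is equivalent to weak convergence of each component in $\HH_i$, so writing $\overline{\boldsymbol{x}}=(\overline{x}_1,\ldots,\overline{x}_m)$ gives $x_{i,n}\weakly\overline{x}_i$ and $p_{i,n}\weakly\overline{x}_i$ for every $i$. Finally, Proposition~\ref{p:aux} tells us that every element of $\zer(\boldsymbol{A}+\boldsymbol{B})$ solves Problem~\ref{prob:main}, so $(\overline{x}_1,\ldots,\overline{x}_m)$ is such a solution, which completes the argument. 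The only genuinely delicate point is the bookkeeping verifying that \eqref{e:tseng1}, including the placement of the perturbation terms, is literally the errored Tseng scheme to which \cite{Siopt3} applies — in particular that the backward step carries an additive error $\boldsymbol{b}_n$ outside the resolvent while the two forward steps carry the errors $\boldsymbol{a}_n$ and $\boldsymbol{c}_n$ inside; everything else is a direct transcription.
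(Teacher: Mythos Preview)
Your proof is correct and follows essentially the same route as the paper's own proof: reduce \eqref{e:tseng1} to the perturbed Tseng forward-backward-forward iteration \eqref{e:tseng2} on $\HHH$ for $\boldsymbol{A}=\partial\boldsymbol{f}$ and $\boldsymbol{B}$ as in \eqref{e:B}, verify the hypotheses (monotonicity and $\chi$-Lipschitz continuity of $\boldsymbol{B}$, nonemptiness of $\zer(\boldsymbol{A}+\boldsymbol{B})$, summable errors, admissible stepsizes), invoke \cite[Theorem~2.5(ii)]{Siopt3}, and conclude via Proposition~\ref{p:aux}. Your version is slightly more explicit about the ancillary checks (maximal monotonicity of $\boldsymbol{B}$, nonemptiness of the stepsize interval, componentwise weak convergence), but there is no substantive difference.
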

\begin{proof}
Let $\boldsymbol{A}$ and $\boldsymbol{B}$ be defined as 
\eqref{e:B}. Then \eqref{e:exist1} yields
$\zer(\boldsymbol{A}+\boldsymbol{B})\neq\emp$ and, for every 
$\gamma\in\RPP$, \eqref{e:fermat} yields $J_{\gamma\boldsymbol{A}}
=\prox_{\gamma\boldsymbol{f}}$. 
In addition, we deduce from \eqref{e:Bmon} and 
\eqref{e:BLip} that $\boldsymbol{B}$ is monotone 
and $\chi$--Lipschitzian. Now set
\begin{equation}
(\forall n\in\NN)\quad
\begin{cases}
\boldsymbol{x}_n=(x_{1,n},\ldots,x_{m,n})\\
\boldsymbol{y}_n=(y_{1,n},\ldots,y_{m,n})\\
\boldsymbol{p}_n=(p_{1,n},\ldots,p_{m,n})\\
\boldsymbol{q}_n=(q_{1,n},\ldots,q_{m,n})\\
\end{cases}\quad\text{and}\quad
\begin{cases}
\boldsymbol{a}_n=(a_{1,n},\ldots,a_{m,n})\\
\boldsymbol{b}_n=(b_{1,n},\ldots,b_{m,n})\\
\boldsymbol{c}_n=(c_{1,n},\ldots,c_{m,n}).
\end{cases}
\end{equation}
Then \eqref{e:tseng1} is equivalent to
\begin{align}
\label{e:tseng2}
(\forall n\in\NN)\quad
&\left\lfloor 
\begin{array}{l}
\boldsymbol{y}_{n}=\boldsymbol{x}_{n}
-\gamma_n(\boldsymbol{B}\boldsymbol{x}_{n}
+\boldsymbol{a}_{n})\\
\boldsymbol{p}_{n}=J_{\gamma_n\boldsymbol{A}}
\boldsymbol{y}_{n}+\boldsymbol{b}_{n}\\
\boldsymbol{q}_{n}=\boldsymbol{p}_{n}-
\gamma_n(\boldsymbol{B}\boldsymbol{p}_{n}+\boldsymbol{c}_{n})\\
\boldsymbol{x}_{n+1}=\boldsymbol{x}_{n}-\boldsymbol{y}_{n}
+\boldsymbol{q}_{n}.
\end{array}
\right.
\end{align}
Therefore, the result follows from \cite[Theorem~2.5(ii)]{Siopt3} 
and Proposition~\ref{p:aux}.
\end{proof}

Note that two (forward) gradient steps involving the individual 
penalties $(\boldsymbol{g}_i)_{1\leq i\leq m}$ and one (backward) 
proximal step involving the common penalty $\boldsymbol{f}$ are 
required at each iteration of \eqref{e:tseng1}. 

\subsection{Forward-backward algorithm}
\label{ssec:2}
Our second method for solving Problem~\ref{prob:main} 
is somewhat simpler than \eqref{e:tseng1} but requires stronger
hypotheses on $(\boldsymbol{g}_i)_{1\leq i\leq m}$. This method 
is an application of the forward-backward splitting algorithm 
(see \cite{Sico10,Opti04} and the references therein for 
background). 

\begin{theorem}
\label{t:main0}
In Problem~\ref{prob:main}, suppose that there exist 
$(z_1,\ldots,z_m)\in\HHH$ such that
\begin{equation}
\label{e:exist2}
-\big(\nabli{1}\boldsymbol{g}_1
(z_1,\ldots,z_m),\ldots,\nabli{m}\boldsymbol{g}_m
(z_1,\ldots,z_m)\big)\in\partial\boldsymbol{f}
(z_1,\ldots,z_m)
\end{equation}
and $\chi\in\RPP$ such that
\begin{multline}
\label{e:Bcoco}
(\forall (x_1,\ldots,x_m)\in\HHH)
(\forall (y_1,\ldots,y_m)\in\HHH)\\
\hskip -3cm\sum_{i=1}^m\scal{\nabli{i}\boldsymbol{g}_i
(x_1,\ldots,x_m)-\nabli{i}\boldsymbol{g}_i
(y_1,\ldots,y_m)}{x_i-y_i}\\
\geq\frac{1}{\chi}\sum_{i=1}^m\|\nabli{i}\boldsymbol{g}_i
(x_1,\ldots,x_m)-\nabli{i}\boldsymbol{g}_i
(y_1,\ldots,y_m)\|^2.
\end{multline} 
Let $\varepsilon\in\left]0,2/(\chi+1)\right[$ and
let $(\gamma_n)_{n\in\NN}$ be a sequence in 
$[\varepsilon,(2-\varepsilon)/\chi]$. Moreover, 
for every $i\in\{1,\ldots,m\}$, let 
$x_{i,0}\in\HH_i$, and let $(a_{i,n})_{n\in\NN}$ and 
$(b_{i,n})_{n\in\NN}$ 
be absolutely summable sequences in $\HH_i$.
Now consider the following routine.
\begin{align}
\label{e:FB}
(\forall n\in\NN)\quad
&\left\lfloor 
\begin{array}{l}
\text{For}\:\:i=1,\ldots,m\\
\lfloor\:y_{i,n}=x_{i,n}-\gamma_n(\nabli{i}\boldsymbol{g}_i
(x_{1,n},\ldots,x_{m,n})+a_{i,n})\\[2mm]
(x_{1,n+1},\ldots,x_{m,n+1})=\prox_{\gamma_n\boldsymbol{f}}
(y_{1,n},\ldots,y_{m,n})+(b_{1,n},\ldots,b_{m,n}).
\end{array}
\right.
\end{align}
Then there exists a solution 
$(\overline{x}_1,\ldots,\overline{x}_m)$ to Problem~\ref{prob:main} 
such that, for every $i\in\{1,\ldots,m\}$,
$x_{i,n}\weakly\overline{x}_i$ and  
$\nabli{i}\boldsymbol{g}_i(x_{1,n},\ldots,x_{m,n})\to
\nabli{i}\boldsymbol{g}_i(\overline{x}_1,\ldots,\overline{x}_m)$.
\end{theorem}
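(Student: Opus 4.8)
The plan is to reduce Theorem~\ref{t:main0} to the convergence theory of the forward--backward splitting algorithm exactly as Theorem~\ref{t:main} was reduced to the forward--backward--forward theory of \cite{Siopt3}. First I would introduce $\boldsymbol{A}=\partial\boldsymbol{f}$ and $\boldsymbol{B}$ as in \eqref{e:B}, and rewrite the routine \eqref{e:FB} in the product space $\HHH$ using the aggregated variables $\boldsymbol{x}_n=(x_{i,n})_{1\le i\le m}$, $\boldsymbol{y}_n=(y_{i,n})_{1\le i\le m}$, $\boldsymbol{a}_n=(a_{i,n})_{1\le i\le m}$, $\boldsymbol{b}_n=(b_{i,n})_{1\le i\le m}$, so that \eqref{e:FB} becomes
\begin{equation}
\boldsymbol{y}_n=\boldsymbol{x}_n-\gamma_n(\boldsymbol{B}\boldsymbol{x}_n+\boldsymbol{a}_n),\qquad
\boldsymbol{x}_{n+1}=J_{\gamma_n\boldsymbol{A}}\boldsymbol{y}_n+\boldsymbol{b}_n .
\end{equation}
This is the inexact forward--backward iteration for $\zer(\boldsymbol{A}+\boldsymbol{B})$ with error sequences $(\boldsymbol{a}_n)$ on the forward step and $(\boldsymbol{b}_n)$ on the backward step, both absolutely summable since each coordinate sequence is.

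The key structural fact to check is that \eqref{e:Bcoco} says precisely that $\boldsymbol{B}$ is $(1/\chi)$--cocoercive on $\HHH$: the left-hand sum is $\pscal{\boldsymbol{B}\boldsymbol{x}-\boldsymbol{B}\boldsymbol{y}}{\boldsymbol{x}-\boldsymbol{y}}$ and the right-hand sum is $\chi^{-1}|||\boldsymbol{B}\boldsymbol{x}-\boldsymbol{B}\boldsymbol{y}|||^2$, by the definition of the product scalar product and norm. In particular $\boldsymbol{B}$ is monotone and $\chi$--Lipschitzian, so $\dom\boldsymbol{B}=\HHH$, and $\boldsymbol{A}$ is maximally monotone because $\boldsymbol{f}\in\Gamma_0(\HHH)$; moreover \eqref{e:exist2} gives $-\boldsymbol{B}(z_1,\ldots,z_m)\in\boldsymbol{A}(z_1,\ldots,z_m)$, hence $\zer(\boldsymbol{A}+\boldsymbol{B})\neq\emp$. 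With the cocoercivity constant $1/\chi$, the standard step-size range for forward--backward is $\gamma_n\in[\varepsilon,2/\chi-\varepsilon']$, which is what the hypothesis $\gamma_n\in[\varepsilon,(2-\varepsilon)/\chi]$ with $\varepsilon\in\left]0,2/(\chi+1)\right[$ encodes (the bound $\varepsilon<2/(\chi+1)$ just guarantees the interval is nonempty, i.e.\ $\varepsilon\le(2-\varepsilon)/\chi$).

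Next I would invoke the convergence theorem for the inexact forward--backward algorithm — e.g.\ \cite[Theorem~25.8 or Corollary~27.?]{Livre1}, or the references \cite{Sico10,Opti04} cited just before the statement — to conclude that $(\boldsymbol{x}_n)_{n\in\NN}$ converges weakly to some $\overline{\boldsymbol{x}}=(\overline{x}_1,\ldots,\overline{x}_m)\in\zer(\boldsymbol{A}+\boldsymbol{B})$, and in addition that $\boldsymbol{B}\boldsymbol{x}_n\to\boldsymbol{B}\overline{\boldsymbol{x}}$ strongly; this last conclusion is the standard by-product of cocoercivity (summability of $\sum_n\gamma_n(2/\chi-\gamma_n)|||\boldsymbol{B}\boldsymbol{x}_n-\boldsymbol{B}\overline{\boldsymbol{x}}|||^2<\pinf$ together with $\gamma_n$ bounded away from $0$ and $2/\chi$). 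Projecting these two conclusions onto the $i$th coordinate gives $x_{i,n}\weakly\overline{x}_i$ and $\nabli{i}\boldsymbol{g}_i(x_{1,n},\ldots,x_{m,n})\to\nabli{i}\boldsymbol{g}_i(\overline{x}_1,\ldots,\overline{x}_m)$ for every $i$. Finally, Proposition~\ref{p:aux} turns the limit $\overline{\boldsymbol{x}}\in\zer(\boldsymbol{A}+\boldsymbol{B})$ into the statement that $(\overline{x}_1,\ldots,\overline{x}_m)$ solves Problem~\ref{prob:main}, completing the proof.

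I do not anticipate a genuine obstacle here: the whole argument is a translation exercise. The one place to be slightly careful is matching the error model of the cited forward--backward theorem to the present one — specifically that an additive perturbation $\boldsymbol{b}_n$ after the resolvent and an additive perturbation $-\gamma_n\boldsymbol{a}_n$ inside the forward step both fall under "summable errors" in the hypotheses of that theorem, and that absolute summability in each $\HH_i$ implies absolute summability of the aggregated sequences in $\HHH$ (immediate from $|||\boldsymbol{a}_n|||\le\sum_{i=1}^m\|a_{i,n}\|$). If the exact reference in \cite{Livre1} only allows the error on one of the two steps, one can absorb $\boldsymbol{b}_n$ into the next forward evaluation or cite the more general perturbed version in \cite{Sico10}; either way the summability is preserved and the conclusion is unchanged.
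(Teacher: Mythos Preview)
Your proposal is correct and follows exactly the paper's approach: define $\boldsymbol{A}$ and $\boldsymbol{B}$ as in \eqref{e:B}, recognize \eqref{e:Bcoco} as $\chi^{-1}$--cocoercivity of $\boldsymbol{B}$ and \eqref{e:exist2} as $\zer(\boldsymbol{A}+\boldsymbol{B})\neq\emp$, recast \eqref{e:FB} as the inexact forward--backward iteration in $\HHH$, and conclude via Proposition~\ref{p:aux} together with the forward--backward convergence theorem (the paper cites \cite[Theorem~2.8(i)\&(ii)]{Sico10} specifically). Your extra remarks on the step-size interval and the summability of the aggregated errors are sound elaborations of points the paper leaves implicit.
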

\begin{proof}
If we define $\boldsymbol{A}$ and $\boldsymbol{B}$ as in 
\eqref{e:B}, \eqref{e:exist2} is equivalent to $\zer(\boldsymbol{A}+
\boldsymbol{B})\neq\emp$, and it follows from \eqref{e:Bcoco}
that $\boldsymbol{B}$ is $\chi^{-1}$--cocoercive. Moreover,
\eqref{e:FB} can be recast as 
\begin{align}
\label{e:2011-05-24c}
(\forall n\in\NN)\quad
&\left\lfloor 
\begin{array}{l}
\boldsymbol{y}_{n}=\boldsymbol{x}_{n}
-\gamma_n(\boldsymbol{B}\boldsymbol{x}_{n}
+\boldsymbol{a}_{n})\\
\boldsymbol{x}_{n+1}=J_{\gamma_n\boldsymbol{A}}
\boldsymbol{y}_{n}+\boldsymbol{b}_{n}.
\end{array}
\right.
\end{align}
The result hence follows from Proposition~\ref{p:aux} and 
\cite[Theorem~2.8(i)\&(ii)]{Sico10}.
\end{proof}

Theorem~\ref{t:main0} imposes more restrictions on 
$(\boldsymbol{g}_i)_{1\leq i\leq m}$. However, unlike the 
forward-backward-forward 
algorithm used in Section~\ref{ssec:1}, it employs only one 
forward step at each iteration. In addition, 
this method allows for larger gradient steps since the sequence 
$(\gamma_n)_{n\in\NN}$ lies in $\left]0,2/\chi\right[$,
as opposed to $\left]0,1/\chi\right[$ in Theorem~\ref{t:main}.

\section{Applications}
\label{sec:4}
The previous results can be used to solve a wide variety of instances
of Problem~\ref{prob:main}. We discuss three examples.

\subsection{Saddle functions and zero-sum games}
We consider an instance of Problem~\ref{prob:main} with $m=2$ players
whose individual penalties $\boldsymbol{g}_1$ and $\boldsymbol{g}_2$ 
are saddle functions. 
\begin{example}
\label{prob:selle}
Let $\chi\in\RPP$, let $\boldsymbol{f}\in\Gamma_0(\HH_1\oplus\HH_2)$,
and let $\boldsymbol{\mathcal{L}}\colon\HH_1\oplus\HH_2\to\RR$ 
be a differentiable function with a $\chi$--Lipschitzian gradient 
such that, for every $x_1\in\HH_1$, $\boldsymbol{\mathcal {L}}
(x_1,\cdot)$ is concave and, for every $x_2\in\HH_2$, 
$\boldsymbol{\mathcal{L}}(\cdot, x_2)$ is convex. 
The problem is to find $x_1\in\HH_1$ and $x_2\in\HH_2$
such that
\begin{equation}
\label{e:ZS}
\begin{cases}
x_1\in\Argmin{x\in\HH_1}{\boldsymbol{f}(x,x_2)
+\boldsymbol{\mathcal {L}}(x,x_2)}\\
x_2\in\Argmin{x\in\HH_2}{\boldsymbol{f}(x_1,x)
-\boldsymbol{\mathcal{L}}(x_1,x)}.
\end{cases}
\end{equation}
\end{example}

\begin{proposition}
\label{p:selle}
In Example~\ref{prob:selle}, suppose that there exists 
$(z_1,z_2)\in\HH_1\oplus\HH_2$ such that
\begin{equation}
\label{e:QCselle}
\big(\!-\!\nabli{1}\boldsymbol{\mathcal{L}}(z_1,z_2),
\nabli{2}\boldsymbol{\mathcal{L}}(z_1,z_2)\big)
\in\partial\boldsymbol{f}(z_1,z_2).
\end{equation}
Let $\varepsilon\in\left]0,1/(\chi+1)\right[$ and let 
$(\gamma_n)_{n\in\NN}$ be a sequence in 
$\left[\varepsilon,(1-\varepsilon)/\chi\right]$.
Moreover, let $(x_{1,0},x_{2,0})\in\HH_1\oplus\HH_2$, let 
$(a_{1,n})_{n\in\NN}$, $(b_{1,n})_{n\in\NN}$, and
$(c_{1,n})_{n\in\NN}$ be absolutely summable sequences in
$\HH_1$, and let $(a_{2,n})_{n\in\NN}$, $(b_{2,n})_{n\in\NN}$, and 
$(c_{2,n})_{n\in\NN}$ be absolutely summable sequences in $\HH_2$. 
Now consider the following routine. 
\begin{align}
\label{e:tseng1selle}
(\forall n\in\NN)\quad
&\left\lfloor 
\begin{array}{l}
y_{1,n}=x_{1,n}-\gamma_n(\nabli{1}
\boldsymbol{\mathcal{L}}(x_{1,n},x_{2,n})
+a_{1,n})\\
y_{2,n}=x_{2,n}+\gamma_n(\nabli{2}
\boldsymbol{\mathcal{L}}(x_{1,n},x_{2,n})
+a_{2,n})\\
(p_{1,n},p_{2,n})=\prox_{\gamma_n
\boldsymbol{f}}(y_{1,n},y_{2,n})
+(b_{1,n},b_{2,n})\\
q_{1,n}=p_{1,n}-\gamma_n(\nabli{1}
\boldsymbol{\mathcal{L}}(p_{1,n},p_{2,n})
+c_{1,n})\\
q_{2,n}=p_{2,n}+\gamma_n(\nabli{2}
\boldsymbol{\mathcal{L}}(p_{1,n},p_{2,n})
+c_{2,n})\\
x_{1,n+1}=x_{1,n}-y_{1,n}+q_{1,n}\\
x_{2,n+1}=x_{2,n}-y_{2,n}+q_{2,n}.
\end{array}
\right.
\end{align}
Then there exists a solution 
$(\overline{x}_1,\overline{x}_1)$ to Example~\ref{prob:selle} such 
that $x_{1,n}\weakly\overline{x}_1$, $p_{1,n}\weakly\overline{x}_1$,
$x_{2,n}\weakly\overline{x}_2$, and $p_{2,n}\weakly\overline{x}_2$.
\end{proposition}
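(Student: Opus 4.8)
The plan is to recognize Proposition~\ref{p:selle} as a direct specialization of Theorem~\ref{t:main} to the two-player saddle setting of Example~\ref{prob:selle}, with the explicit dictionary $\boldsymbol{g}_1=\boldsymbol{\mathcal L}$ and $\boldsymbol{g}_2=-\boldsymbol{\mathcal L}$. The first step is to check that this choice fits the framework of Problem~\ref{prob:main}: $\boldsymbol{f}\in\Gamma_0(\HH_1\oplus\HH_2)$ is given; for fixed $x_2$, the map $x\mapsto\boldsymbol{\mathcal L}(x,x_2)$ is convex and differentiable on $\HH_1$, and for fixed $x_1$, the map $x\mapsto-\boldsymbol{\mathcal L}(x_1,x)$ is convex (since $\boldsymbol{\mathcal L}(x_1,\cdot)$ is concave) and differentiable on $\HH_2$. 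Thus $\nabli{1}\boldsymbol{g}_1(x_1,x_2)=\nabli{1}\boldsymbol{\mathcal L}(x_1,x_2)$ and $\nabli{2}\boldsymbol{g}_2(x_1,x_2)=-\nabli{2}\boldsymbol{\mathcal L}(x_1,x_2)$, and \eqref{e:ZS} is precisely \eqref{e:BR21} for $m=2$ with these data.

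The second step is to verify the monotonicity hypothesis \eqref{e:Bmon}. With the above identification, the operator $\boldsymbol{B}$ of \eqref{e:B} is $\boldsymbol{x}=(x_1,x_2)\mapsto(\nabli{1}\boldsymbol{\mathcal L}(x_1,x_2),-\nabli{2}\boldsymbol{\mathcal L}(x_1,x_2))$, which is the classical monotone operator attached to a convex-concave (saddle) function; its monotonicity is a standard fact, but for completeness I would derive it directly. For $\boldsymbol{x}=(x_1,x_2)$ and $\boldsymbol{y}=(y_1,y_2)$, convexity of $\boldsymbol{\mathcal L}(\cdot,x_2)$ and $\boldsymbol{\mathcal L}(\cdot,y_2)$ gives $\scal{\nabli{1}\boldsymbol{\mathcal L}(x_1,x_2)-\nabli{1}\boldsymbol{\mathcal L}(y_1,x_2)}{x_1-y_1}\geq0$ and similar inequalities, while concavity of $\boldsymbol{\mathcal L}(x_1,\cdot)$ and $\boldsymbol{\mathcal L}(y_1,\cdot)$ gives the analogous estimates in the second variable; chaining the mixed terms via the gradient inequalities for the convex functions $\boldsymbol{\mathcal L}(\cdot,x_2)-\boldsymbol{\mathcal L}(\cdot,y_2)+\text{(linear)}$ — equivalently, using that $t\mapsto\boldsymbol{\mathcal L}(x_1+t(y_1-x_1),x_2+t(y_2-x_2))$ inherits the saddle structure — yields $\scal{\nabli{1}\boldsymbol{\mathcal L}(x_1,x_2)-\nabli{1}\boldsymbol{\mathcal L}(y_1,y_2)}{x_1-y_1}-\scal{\nabli{2}\boldsymbol{\mathcal L}(x_1,x_2)-\nabli{2}\boldsymbol{\mathcal L}(y_1,y_2)}{x_2-y_2}\geq0$, which is exactly \eqref{e:Bmon}. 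I expect this verification to be the only place needing genuine (if routine) work.

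The third step is the Lipschitz hypothesis \eqref{e:BLip}: since $\nabla\boldsymbol{\mathcal L}=(\nabli{1}\boldsymbol{\mathcal L},\nabli{2}\boldsymbol{\mathcal L})$ is $\chi$-Lipschitzian on $\HH_1\oplus\HH_2$, we have $\|\nabli{1}\boldsymbol{\mathcal L}(\boldsymbol{x})-\nabli{1}\boldsymbol{\mathcal L}(\boldsymbol{y})\|^2+\|\nabli{2}\boldsymbol{\mathcal L}(\boldsymbol{x})-\nabli{2}\boldsymbol{\mathcal L}(\boldsymbol{y})\|^2=\|\nabla\boldsymbol{\mathcal L}(\boldsymbol{x})-\nabla\boldsymbol{\mathcal L}(\boldsymbol{y})\|^2\leq\chi^2(\|x_1-y_1\|^2+\|x_2-y_2\|^2)$, and the sign flip in the second component does not change the norm, so \eqref{e:BLip} holds with the same $\chi$. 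Next, \eqref{e:QCselle} is literally \eqref{e:exist1} under the dictionary (the pair $(-\nabli{1}\boldsymbol{\mathcal L}(z_1,z_2),\nabli{2}\boldsymbol{\mathcal L}(z_1,z_2))$ equals $-(\nabli{1}\boldsymbol{g}_1(z_1,z_2),\nabli{2}\boldsymbol{g}_2(z_1,z_2))$), and the routine \eqref{e:tseng1selle} is exactly \eqref{e:tseng1} written out for $m=2$ — note the $+\gamma_n$ in the $y_{2,n}$ and $q_{2,n}$ updates corresponds to $-\gamma_n\nabli{2}\boldsymbol{g}_2=+\gamma_n\nabli{2}\boldsymbol{\mathcal L}$. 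The parameter ranges on $\varepsilon$ and $(\gamma_n)_{n\in\NN}$ and the absolute summability of the error sequences match verbatim. Finally, invoking Theorem~\ref{t:main} yields a solution $(\overline x_1,\overline x_2)$ of \eqref{e:BR21} — hence of \eqref{e:ZS} — with $x_{1,n}\weakly\overline x_1$, $x_{2,n}\weakly\overline x_2$, $p_{1,n}\weakly\overline x_1$, and $p_{2,n}\weakly\overline x_2$, which is the claim. (The typo $(\overline x_1,\overline x_1)$ in the statement should read $(\overline x_1,\overline x_2)$.)
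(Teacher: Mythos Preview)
Your proposal is correct and follows essentially the same route as the paper: identify Example~\ref{prob:selle} with Problem~\ref{prob:main} for $m=2$, $\boldsymbol{g}_1=\boldsymbol{\mathcal L}$, $\boldsymbol{g}_2=-\boldsymbol{\mathcal L}$, verify \eqref{e:Bmon}, \eqref{e:exist1}, and \eqref{e:BLip}, match the routines, and invoke Theorem~\ref{t:main}. The only cosmetic difference is that the paper dispatches the monotonicity of $(x_1,x_2)\mapsto(\nabli{1}\boldsymbol{\mathcal L}(x_1,x_2),-\nabli{2}\boldsymbol{\mathcal L}(x_1,x_2))$ by citing Rockafellar's classical result on saddle operators rather than sketching it by hand, and the sign of the error sequences $a_{2,n}$, $c_{2,n}$ is flipped when passing from \eqref{e:tseng1} to \eqref{e:tseng1selle}, which is immaterial since absolute summability is invariant under negation.
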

\begin{proof} Example~\ref{prob:selle} corresponds to the 
particular instance of Problem~\ref{prob:main} in which $m=2$, 
$\boldsymbol{g}_1=\boldsymbol{\mathcal{L}}$, 
and $\boldsymbol{g}_2=-\boldsymbol{\mathcal{L}}$. Indeed, 
it follows from \cite[Theorem~1]{Rock70} that the operator
\begin{equation}
(x_1,x_2)\mapsto\big(\nabli{1}
\boldsymbol{\mathcal{L}}(x_1,x_2),
-\nabli{2}\boldsymbol{\mathcal{L}}(x_1,x_2)\big)
\end{equation}
is monotone in $\HH_1\oplus\HH_2$ and hence \eqref{e:Bmon} holds. 
In addition, \eqref{e:QCselle} implies \eqref{e:exist1} and, 
since $\nabla\boldsymbol{\mathcal{L}}$ is $\chi$--Lipschitzian, 
\eqref{e:BLip} holds.
Altogether, since \eqref{e:tseng1} reduces to \eqref{e:tseng1selle},
the result follows from Theorem~\ref{t:main}. 
\end{proof}

Next, we examine an application of Proposition~\ref{p:selle} to 
$2$-player finite zero-sum games.

\begin{example}
We consider a $2$-player finite zero-sum game 
(for complements and background on finite games, 
see~\cite{Weib95}). Let $S_1$ be the finite set of pure strategies 
of player $1$, with cardinality $N_1$, and let 
\begin{equation}
\label{e:C_i}
C_1=\Menge{(\xi_j)_{1\leq j\leq N_1}\in\left[0,1\right]^{N_1}}
{\sum_{j=1}^{N_1}\xi_j=1}
\end{equation}
be his set of mixed strategies ($S_2$, $N_2$, and $C_2$ are defined
likewise). Moreover, let $L$ be an $N_1\times N_2$ real cost matrix 
such that
\begin{equation}
\label{e:cqzs}
(\exi z_1\in C_1)(\exi z_2\in C_2)\quad
-Lz_2\in N_{C_1}z_1\:\:\:\text{and}\:\:\:L^{\top}z_1\in N_{C_2}z_2.
\end{equation}
The problem is to
\begin{equation}
\label{e:ZStrong2}
\text{find}\quad x_1\in\RR^{N_1}\:\:\:\text{and}\:\:\:x_2\in\RR^{N_2}
\quad\text{such that}\quad
\begin{cases}
x_1\in\Argmin{x\in C_1}{x^{\top}L x_2}\\
x_2\in\Argmax{x\in C_2}{x_1^{\top}L x}.
\end{cases}
\end{equation}
Since the penalty function of player 1 is 
$(x_1,x_2)\mapsto x_1^{\top}L x_2$ and the penalty function of player
2 is $(x_1,x_2)\mapsto -x_1^{\top}L x_2$, \eqref{e:ZStrong2} is a 
zero-sum game. It corresponds to the particular instance of 
Example~\ref{prob:selle} in which $\HH_1=\RR^{N_1}$, 
$\HH_2=\RR^{N_2}$, $\boldsymbol{f}\colon(x_1,x_2)\mapsto
\iota_{C_1}(x_1)+\iota_{C_2}(x_2)$, and $\boldsymbol{\mathcal{L}}
\colon(x_1,x_2)\mapsto x_1^{\top}Lx_2$. Indeed,
since $C_1$ and $C_2$ are nonempty closed convex sets, 
$\boldsymbol{f}\in\Gamma_0(\HH_1\oplus\HH_2)$. Moreover,
$x_1\mapsto\boldsymbol{\mathcal{L}}(x_1,x_2)$ and $x_2\mapsto-
\boldsymbol{\mathcal{L}}(x_1,x_2)$ are convex, and
$\nabla\boldsymbol{\mathcal{L}}\colon(x_1,x_2)
\mapsto(Lx_2,L^{\top}x_1)$ is linear and bounded, with
$\|\nabla\boldsymbol{\mathcal{L}}\|=\|L\|$. In addition, 
for every $\gamma\in\RPP$, $\prox_{\gamma
\boldsymbol{f}}=(P_{C_1},P_{C_2})$ \cite[Proposition~23.30]{Livre1}. 
Hence, \eqref{e:tseng1selle} reduces to (we set the error terms 
to zero for simplicity)
\begin{align}
\label{e:tseng1selle2}
(\forall n\in\NN)\quad
&\left\lfloor 
\begin{array}{l}
y_{1,n}=x_{1,n}-\gamma_nLx_{2,n}\\
y_{2,n}=x_{2,n}+\gamma_nL^{\top}x_{1,n}\\
p_{1,n}=P_{C_1}y_{1,n}\\
p_{2,n}=P_{C_2}y_{2,n}\\
q_{1,n}=p_{1,n}-\gamma_nLp_{2,n}\\
q_{2,n}=p_{2,n}+\gamma_nL^{\top}p_{1,n}\\
x_{1,n+1}=x_{1,n}-y_{1,n}+q_{1,n}\\
x_{2,n+1}=x_{2,n}-y_{2,n}+q_{2,n},
\end{array}
\right.
\end{align}
where $(\gamma_n)_{n\in\NN}$ is a sequence in 
$\left[\varepsilon,(1-\varepsilon)/\|L\|\right]$ for some
arbitrary $\varepsilon\in\left]0,1/(\|L\|+1)\right[$.
Since $\partial\boldsymbol{f}\colon(x_1,x_2)\mapsto
N_{C_1}x_1\times N_{C_2}x_2$, \eqref{e:cqzs} yields 
\eqref{e:QCselle}. Altogether, Proposition~\ref{p:selle} asserts 
that the sequences $(x_{1,n})_{n\in\NN}$ and $(x_{2,n})_{n\in\NN}$ 
generated by \eqref{e:tseng1selle2} converge to 
$\overline{x}_1\in\RR^{N_1}$ and
$\overline{x}_2\in\RR^{N_2}$, respectively, such that 
$(\overline{x}_1,\overline{x}_2)$ is a solution to 
\eqref{e:ZStrong2}.
\end{example}

\subsection{Generalized Nash equilibria}
We consider the particular case of Problem~\ref{prob:main} 
in which $\boldsymbol{f}$ is the indicator function of a closed convex 
subset of $\HHH=\HH_1\oplus\cdots\oplus\HH_m$. 
\begin{example}
\label{prob:gnep}
Let $\boldsymbol{C}\subset\HHH$ be a nonempty closed convex set and,
for every $i\in\{1,\ldots,m\}$, let 
$\boldsymbol{g}_i\colon\HHH\to\RX$
be a function which is differentiable with respect to its $i$th 
variable. Suppose that
\begin{multline}
\label{e:BmonGNEP}
\big(\forall (x_1,\ldots,x_m)\in\HHH\big)
\big(\forall (y_1,\ldots,y_m)\in\HHH\big)\\
\sum_{i=1}^m\scal{\nabli{i}\boldsymbol{g}_i
(x_1,\ldots,x_m)-\nabli{i}\boldsymbol{g}_i
(y_1,\ldots,y_m)}{x_i-y_i}\geq0
\end{multline} 
and set
\begin{equation}
(\forall (x_1,\ldots,x_m)\in\HHH)\quad
\begin{cases}
\hspace{0.4cm}\boldsymbol{Q}_1(x_2,\ldots,x_{m})\hspace{0.1cm}
=\hskip .2mm \menge{x\in\HH_1}{(x,x_2,\ldots,x_m)\in\boldsymbol{C}}\\
\hspace{3.25cm}\vdots\\
\boldsymbol{Q}_m(x_1,\ldots,x_{m-1})=\menge{x\in\HH_m}
{(x_1,\ldots,x_{m-1},x)\in\boldsymbol{C}}.
\end{cases} 
\end{equation}
The problem is to find $x_1\in\HH_1,\ldots,$ 
$x_m\in\HH_m$ such that
\begin{equation}
\begin{cases}
\,x_1\hskip-.3cm&\in\:\:\Argmin{x\in\boldsymbol{Q}_1(x_2,\ldots,
x_{m})}{\boldsymbol{g}_1(x,x_2,\ldots,x_m)}\\
&\:\vdots\\
x_m\hskip-.3cm&\in\:\:\Argmin{x\in\boldsymbol{Q}_m(x_1,\ldots,
x_{m-1})}{\boldsymbol{g}_m(x_1,\ldots, x_{m-1},x)}.
\end{cases}
\end{equation}
\end{example}

The solutions to Example~\ref{prob:gnep} are called generalized Nash 
equilibria \cite{Facc10}, social equilibria \cite{Debr52}, or 
equilibria of abstract economies \cite{Arro54}, and their existence 
has been studied in \cite{Arro54,Debr52}. We deduce from 
Proposition~\ref{p:aux} that we can find a solution to 
Example~\ref{prob:gnep} by solving a variational inequality in 
$\HHH$, provided the latter has solutions. This observation is also
made in \cite{Facc10}, which investigates a Euclidean setting in 
which additional smoothness properties are imposed on 
$(\boldsymbol{g}_i)_{1\leq i\leq m}$. 
An alternative approach for solving Example~\ref{prob:gnep} in 
Euclidean spaces is also proposed in \cite{VHeu09} with stronger 
differentiability properties on 
$(\boldsymbol{g}_i)_{1\leq i\leq m}$ and 
a monotonicity assumption of the form \eqref{e:BmonGNEP}. However, 
the convergence of the method is not guaranteed. Below we derive from
Section~\ref{ssec:1} a weakly convergent method for solving 
Example~\ref{prob:gnep}.

\begin{proposition}
\label{p:gnep}
In Example~\ref{prob:gnep}, suppose that there exist 
$(z_1,\ldots,z_m)\in\HHH$ such that
\begin{equation}
\label{e:existGNEP}
-\big(\nabli{1}\boldsymbol{g}_1
(z_1,\ldots,z_m),\ldots,\nabli{m}\boldsymbol{g}_m
(z_1,\ldots,z_m)\big)\in N_{\boldsymbol{C}}
(z_1,\ldots,z_m)
\end{equation}
and $\chi\in\RPP$ such that
\begin{multline}
\label{e:BLipGNEP}
(\forall (x_1,\ldots,x_m)\in\HHH)
(\forall (y_1,\ldots,y_m)\in\HHH)\\
\sum_{i=1}^m\|\nabli{i}\boldsymbol{g}_i
(x_1,\ldots,x_m)-\nabli{i}\boldsymbol{g}_i
(y_1,\ldots,y_m)\|^2\leq\chi^2\sum_{i=1}^m\|x_i-y_i\|^2.
\end{multline}
Let $\varepsilon\in\left]0,1/(\chi+1)\right[$ and let 
$(\gamma_n)_{n\in\NN}$ be a sequence in 
$\left[\varepsilon,(1-\varepsilon)/\chi\right]$.
Moreover, for every $i\in\{1,\ldots,m\}$, let $x_{i,0}\in\HH_i$,
and let $(a_{i,n})_{n\in\NN}$, $(b_{i,n})_{n\in\NN}$, and 
$(c_{i,n})_{n\in\NN}$ be absolutely summable sequences in $\HH_i$.
Now consider the following routine. 
\begin{align}
\label{e:tseng1gnep}
(\forall n\in\NN)\quad
&\left\lfloor 
\begin{array}{l}
\text{For}\:\:i=1,\ldots,m\\
\lfloor\:y_{i,n}=x_{i,n}-\gamma_n(\nabli{i}\boldsymbol{g}_i
(x_{1,n},\ldots,x_{m,n})+a_{i,n})\\
(p_{1,n},\ldots,p_{m,n})=P_{\boldsymbol{C}}(y_{1,n},\ldots,y_{m,n})
+(b_{1,n},\ldots,b_{m,n})\\
\text{For}\:\:i=1,\ldots,m\\
\left\lfloor
\begin{array}{l}
q_{i,n}=p_{i,n}-\gamma_n(\nabli{i}\boldsymbol{g}_i
(p_{1,n},\ldots,p_{m,n})+c_{i,n})\\
x_{i,n+1}=x_{i,n}-y_{i,n}+q_{i,n}.
\end{array}
\right.
\end{array}
\right.
\end{align}
Then there exists a solution 
$(\overline{x}_1,\ldots,\overline{x}_m)$ to Example~\ref{prob:gnep}
such that, for every $i\in\{1,\ldots,m\}$,
$x_{i,n}\weakly\overline{x}_i$ and $p_{i,n}\weakly\overline{x}_i$.
\end{proposition}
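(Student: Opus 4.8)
The plan is to recognize Proposition~\ref{p:gnep} as the specialization of Theorem~\ref{t:main} to the case $\boldsymbol{f}=\iota_{\boldsymbol{C}}$, exactly as Proposition~\ref{p:selle} was the specialization to saddle functions. First I would verify that Example~\ref{prob:gnep} is an instance of Problem~\ref{prob:main}: take $\boldsymbol{f}=\iota_{\boldsymbol{C}}$, which lies in $\Gamma_0(\HHH)$ since $\boldsymbol{C}$ is nonempty, closed, and convex; the functions $(\boldsymbol{g}_i)_{1\leq i\leq m}$ are differentiable with respect to their $i$th variable by hypothesis, and \eqref{e:BmonGNEP} is precisely \eqref{e:Bmon}. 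Note that, as remarked after Problem~\ref{prob:main}, \eqref{e:Bmon} forces each $\boldsymbol{g}_i$ to be convex in its $i$th variable, so the convexity requirement of Problem~\ref{prob:main} is met. The remaining point is that the constrained minimization defining the generalized Nash equilibrium coincides with \eqref{e:BR21}: for fixed $(x_j)_{j\neq i}$, minimizing $\boldsymbol{g}_i(x_1,\ldots,x,\ldots,x_m)$ over $\boldsymbol{Q}_i$ is the same as minimizing $\iota_{\boldsymbol{C}}(x_1,\ldots,x,\ldots,x_m)+\boldsymbol{g}_i(x_1,\ldots,x,\ldots,x_m)$ over $\HH_i$, since the indicator term enforces membership in $\boldsymbol{Q}_i$ and vanishes there.

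Next I would check the hypotheses of Theorem~\ref{t:main}. Condition \eqref{e:BLip} is exactly the stated \eqref{e:BLipGNEP}. Condition \eqref{e:exist1} requires a point $(z_1,\ldots,z_m)$ with $-(\nabli{1}\boldsymbol{g}_1(\boldsymbol{z}),\ldots,\nabli{m}\boldsymbol{g}_m(\boldsymbol{z}))\in\partial\boldsymbol{f}(\boldsymbol{z})$; but $\partial\boldsymbol{f}=\partial\iota_{\boldsymbol{C}}=N_{\boldsymbol{C}}$, so this is precisely \eqref{e:existGNEP}. The stepsize constraints on $\varepsilon$ and $(\gamma_n)_{n\in\NN}$ are identical. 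Finally, $\prox_{\gamma\boldsymbol{f}}=\prox_{\gamma\iota_{\boldsymbol{C}}}=P_{\boldsymbol{C}}$ for every $\gamma\in\RPP$ (since scaling the indicator by $\gamma$ leaves it unchanged), so the proximal step in \eqref{e:tseng1} becomes the projection step in \eqref{e:tseng1gnep}, and the two routines coincide line for line.

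With all hypotheses of Theorem~\ref{t:main} verified and the iteration \eqref{e:tseng1gnep} identified with \eqref{e:tseng1}, Theorem~\ref{t:main} produces a solution $(\overline{x}_1,\ldots,\overline{x}_m)$ to Problem~\ref{prob:main} with $x_{i,n}\weakly\overline{x}_i$ and $p_{i,n}\weakly\overline{x}_i$ for each $i$. Since, as established in the first paragraph, solutions of this instance of Problem~\ref{prob:main} are exactly the generalized Nash equilibria of Example~\ref{prob:gnep}, the conclusion follows.

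I do not anticipate a genuine obstacle here; the argument is a straightforward reduction. The only point deserving slight care is the identification of the two minimization problems (unconstrained minimization of $\iota_{\boldsymbol{C}}+\boldsymbol{g}_i$ versus constrained minimization of $\boldsymbol{g}_i$ over $\boldsymbol{Q}_i$), together with noting $\partial\iota_{\boldsymbol{C}}=N_{\boldsymbol{C}}$ and $\prox_{\gamma\iota_{\boldsymbol{C}}}=P_{\boldsymbol{C}}$, all of which are recorded in Section~\ref{sec:2}. Everything else is a matter of matching symbols.
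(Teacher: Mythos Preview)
Your proposal is correct and follows exactly the same approach as the paper: recognize Example~\ref{prob:gnep} as the instance of Problem~\ref{prob:main} with $\boldsymbol{f}=\iota_{\boldsymbol{C}}$, use $\partial\iota_{\boldsymbol{C}}=N_{\boldsymbol{C}}$ and $\prox_{\gamma\iota_{\boldsymbol{C}}}=P_{\boldsymbol{C}}$, and invoke Theorem~\ref{t:main}. The paper's proof is a two-line version of yours; your additional verification that the constrained and indicator-penalized minimizations coincide is a harmless elaboration.
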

\begin{proof}
Example~\ref{prob:gnep} corresponds to the particular instance of 
Problem~\ref{prob:main} in which 
$\boldsymbol{f}=\iota_{\boldsymbol{C}}$.
Hence, since $P_{\boldsymbol{C}}=\prox_{\boldsymbol{f}}$,
the result follows from Theorem~\ref{t:main}.
\end{proof}

\subsection{Cyclic proximation problem}

We consider the following problem in 
$\HHH=\HH_1\oplus\cdots\oplus\HH_m$.

\begin{example}
\label{prob:cyclic} 
Let $\GG$ be a real Hilbert space, let 
$\boldsymbol{f}\in\Gamma_0(\HHH)$, and, for 
every $i\in\{1,\ldots,m\}$, let $L_i\colon\HH_i\to\GG$ be a bounded 
linear operator. The problem is to find $x_1\in\HH_1,\ldots,
x_m\in\HH_m$ such that
\begin{equation}
\label{e:cyclic}
\begin{cases}
\,x_1\hskip-.3cm&\in\Argmin{x\in\HH_1}{\boldsymbol{f}
(x,x_2,\ldots,x_m)+\displaystyle{\frac12\|L_1x-L_2x_2\|^2}}\\
\,x_2\hskip-.3cm&\in\Argmin{x\in\HH_2}{\boldsymbol{f}
(x_1,x,\ldots,x_m)+\displaystyle{\frac12\|L_2x-L_3x_3\|^2}}\\
&\,\vdots\\
x_m\hskip-.3cm&\in\Argmin{x\in\HH_m}{\boldsymbol{f}
(x_1,\ldots,x_{m-1},x)+\displaystyle{\frac12\|L_mx-L_1x_1\|^2}}.
\end{cases}
\end{equation}
\end{example}

For every $i\in\{1,\ldots,m\}$, the individual penalty 
function of player $i$ models his desire to
keep some linear transformation $L_i$ of his strategy close
to some linear transformation of that of the next player $i+1$.
In the particular case when $\boldsymbol{f}\colon(x_i)
_{1\leq i\leq m}\mapsto\sum_{i=1}^mf_i(x_i)$, 
a similar formulation is studied in \cite[Section~3.1]{Atto08},
where an algorithm is proposed for solving \eqref{e:cyclic}.
However, each step of the algorithm involves the proximity operator
of a sum of convex functions, which is extremely difficult to 
implement numerically. The method described below circumvents this
difficulty.
 
\begin{proposition}
\label{p:cyclic}
In Example~\ref{prob:cyclic}, suppose that there exists
$(z_1,\ldots,z_m)\in\HHH$ such that
\begin{equation}
\label{e:QCprox}
\big(L_1^*(L_2z_2-L_1z_1),\ldots,L_m^*(L_1z_1-L_mz_m)\big)\in
\partial\boldsymbol{f}(z_1,\ldots,z_m).
\end{equation}
Set $\chi=2\max_{1\leq i\leq m}\|L_i\|^2$, let 
$\varepsilon\in\left]0,2/(\chi+1)\right[$ and let 
$(\gamma_n)_{n\in\NN}$ be a sequence in 
$[\varepsilon,(2-\varepsilon)/\chi]$. For every 
$i\in\{1,\ldots,m\}$, let 
$x_{i,0}\in\HH_i$, and let $(a_{i,n})_{n\in\NN}$ and 
$(b_{i,n})_{n\in\NN}$ be absolutely summable sequences in $\HH_i$. 
Now set $L_{m+1}=L_1$, for every $n\in\NN$, set $x_{m+1,n}=x_{1,n}$, 
and consider the following routine.
\begin{align}
\label{e:FBcyclic}
(\forall n\in\NN)\quad
&\left\lfloor 
\begin{array}{l}
\text{For}\:\:i=1,\ldots,m\\
\lfloor\:y_{i,n}=x_{i,n}-\gamma_n\big(L_i^*(L_ix_{i,n}-
L_{i+1}x_{i+1,n})+a_{i,n}\big)\\[2mm]
(x_{1,n+1},\ldots,x_{m,n+1})=\prox_{\gamma_n\boldsymbol{f}}
(y_{1,n},\ldots,y_{m,n})+(b_{1,n},\ldots,b_{m,n}).
\end{array}
\right.
\end{align}
Then there exists a solution 
$(\overline{x}_1,\ldots,\overline{x}_m)$ to 
Example~\ref{prob:cyclic} such that, for every $i\in\{1,\ldots,m\}$,
$x_{i,n}\weakly\overline{x}_i$ and  
$L_i^*\big(L_i(x_{i,n}-\overline{x}_i)-L_{i+1}(x_{i+1,n}-
\overline{x}_{i+1})\big)\to 0$.
\end{proposition}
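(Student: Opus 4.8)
The plan is to show that Example~\ref{prob:cyclic} is the special case of Problem~\ref{prob:main} (and more precisely of Theorem~\ref{t:main0}) obtained by setting, for every $i\in\{1,\ldots,m\}$,
\begin{equation*}
\boldsymbol{g}_i\colon(x_1,\ldots,x_m)\mapsto\tfrac12\|L_ix_i-L_{i+1}x_{i+1}\|^2,
\end{equation*}
with the cyclic convention $L_{m+1}=L_1$ and $x_{m+1}=x_1$. First I would observe that for each such $i$ the map $x\mapsto\frac12\|L_ix-L_{i+1}x_{i+1}\|^2$ is convex and differentiable on $\HH_i$ with $\nabli{i}\boldsymbol{g}_i(x_1,\ldots,x_m)=L_i^*(L_ix_i-L_{i+1}x_{i+1})$, so that the routine \eqref{e:FBcyclic} is exactly \eqref{e:FB} for this choice of $(\boldsymbol{g}_i)_{1\leq i\leq m}$, and \eqref{e:QCprox} is exactly \eqref{e:exist2}.

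Second, I would verify the cocoercivity hypothesis \eqref{e:Bcoco} with $\chi=2\max_{1\leq i\leq m}\|L_i\|^2$. Writing $\boldsymbol{B}\boldsymbol{x}=\big(L_i^*(L_ix_i-L_{i+1}x_{i+1})\big)_{1\leq i\leq m}$, one computes
\begin{equation*}
\pscal{\boldsymbol{B}\boldsymbol{x}-\boldsymbol{B}\boldsymbol{y}}{\boldsymbol{x}-\boldsymbol{y}}
=\sum_{i=1}^m\big\|L_i(x_i-y_i)-L_{i+1}(x_{i+1}-y_{i+1})\big\|^2,
\end{equation*}
since the cyclic cross terms telescope. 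On the other hand $|||\boldsymbol{B}\boldsymbol{x}-\boldsymbol{B}\boldsymbol{y}|||^2=\sum_i\|L_i^*(L_i(x_i-y_i)-L_{i+1}(x_{i+1}-y_{i+1}))\|^2\leq\max_i\|L_i\|^2\sum_i\|L_i(x_i-y_i)-L_{i+1}(x_{i+1}-y_{i+1})\|^2$, so $|||\boldsymbol{B}\boldsymbol{x}-\boldsymbol{B}\boldsymbol{y}|||^2\leq(\chi/2)\pscal{\boldsymbol{B}\boldsymbol{x}-\boldsymbol{B}\boldsymbol{y}}{\boldsymbol{x}-\boldsymbol{y}}$, which is even stronger than the required $\chi^{-1}|||\boldsymbol{B}\boldsymbol{x}-\boldsymbol{B}\boldsymbol{y}|||^2\leq\pscal{\cdot}{\cdot}$; in particular \eqref{e:Bcoco} holds, hence also \eqref{e:Bmon}. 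One could also bound the right-hand side crudely using $\|L_i(x_i-y_i)-L_{i+1}(x_{i+1}-y_{i+1})\|^2\leq 2\|L_i\|^2\|x_i-y_i\|^2+2\|L_{i+1}\|^2\|x_{i+1}-y_{i+1}\|^2$ to see where the factor $2$ and the maximum come from; either way the constant $\chi=2\max_i\|L_i\|^2$ works.

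Having checked these hypotheses, I would invoke Theorem~\ref{t:main0}: it yields a solution $(\overline{x}_1,\ldots,\overline{x}_m)$ to Problem~\ref{prob:main} for this data --- equivalently, to \eqref{e:cyclic} --- with $x_{i,n}\weakly\overline{x}_i$ and $\nabli{i}\boldsymbol{g}_i(x_{1,n},\ldots,x_{m,n})\to\nabli{i}\boldsymbol{g}_i(\overline{x}_1,\ldots,\overline{x}_m)$ for every $i$. The last convergence, translated through the formula for $\nabli{i}\boldsymbol{g}_i$ and the cyclic convention, reads $L_i^*(L_ix_{i,n}-L_{i+1}x_{i+1,n})\to L_i^*(L_i\overline{x}_i-L_{i+1}\overline{x}_{i+1})$, i.e. $L_i^*\big(L_i(x_{i,n}-\overline{x}_i)-L_{i+1}(x_{i+1,n}-\overline{x}_{i+1})\big)\to0$, which is the claimed residual convergence. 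The only mildly delicate point is the bookkeeping with the cyclic indices (ensuring $\boldsymbol{g}_i$ is genuinely a function on all of $\HHH$ and that \eqref{e:FBcyclic} matches \eqref{e:FB} term by term), together with keeping track of the precise constant in the telescoping computation of $\pscal{\boldsymbol{B}\boldsymbol{x}-\boldsymbol{B}\boldsymbol{y}}{\boldsymbol{x}-\boldsymbol{y}}$; neither presents a real obstacle.
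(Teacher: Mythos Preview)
Your approach is essentially identical to the paper's: specialize Problem~\ref{prob:main} with $\boldsymbol{g}_i(\boldsymbol{x})=\tfrac12\|L_ix_i-L_{i+1}x_{i+1}\|^2$, verify the $\chi^{-1}$-cocoercivity of $\boldsymbol{B}$ via the cyclic telescoping identity, and invoke Theorem~\ref{t:main0}. One slip to fix: the telescoping actually yields $\pscal{\boldsymbol{B}\boldsymbol{x}-\boldsymbol{B}\boldsymbol{y}}{\boldsymbol{x}-\boldsymbol{y}}=\tfrac12\sum_{i}\|L_i(x_i-y_i)-L_{i+1}(x_{i+1}-y_{i+1})\|^2$ (not the full sum), so you obtain exactly $\chi^{-1}$-cocoercivity rather than the ``even stronger'' $2\chi^{-1}$ you claim---but that is precisely what \eqref{e:Bcoco} requires, so the argument still goes through.
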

\begin{proof}
Note that Example~\ref{prob:cyclic} corresponds to the particular
instance of Problem~\ref{prob:main} in which, for every 
$i\in\{1,\ldots,m\}$, $\boldsymbol{g}_i\colon(x_i)_{1\leq i\leq m}
\mapsto\|L_ix_i-L_{i+1}x_{i+1}\|/2$, where we set
$x_{m+1}=x_1$. Indeed, since
\begin{equation}
\label{e:Bgrad}
(\forall (x_1,\ldots,x_m)\in\HHH)\quad
\begin{cases}
\:\:\nabli{1}\boldsymbol{g}_1(x_1,\ldots,x_m)\hskip-.3cm
&=L_1^*(L_1x_1-L_2x_2)\\
\hskip-.3cm&\,\,\vdots\\
\nabli{m}\boldsymbol{g}_m(x_1,\ldots,x_m)\hskip-.3cm
&=L_m^*(L_mx_m-L_1x_1),
\end{cases}
\end{equation} 
the operator $(x_i)_{1\leq i\leq m}\mapsto(\nabli{i}\boldsymbol{g}_i
(x_1,\ldots,x_m))_{1\leq i\leq m}$ is linear and bounded.
Thus, for every $(x_1,\ldots,x_m)\in\HHH$,
\begin{align}
\label{e:calcul}
\sum_{i=1}^m\scal{\nabli{i}\boldsymbol{g}_i(x_1,\ldots,x_m)}{x_i}
&=\sum_{i=1}^m\scal{L_i^*(L_ix_i-L_{i+1}x_{i+1})}{x_i}\nonumber\\
&=\sum_{i=1}^m\scal{L_ix_i-L_{i+1}x_{i+1}}{L_ix_i}\nonumber\\
&=\sum_{i=1}^m\|L_ix_i\|^2-\sum_{i=1}^m
\scal{L_{i+1}x_{i+1}}{L_ix_i}\nonumber\\
&=\frac12\sum_{i=1}^m\|L_ix_i\|^2+\frac12
\sum_{i=1}^m\|L_{i+1}x_{i+1}\|^2
-\sum_{i=1}^m\scal{L_{i+1}x_{i+1}}{L_ix_i}\nonumber\\
&=\sum_{i=1}^m\frac12\|L_ix_i-L_{i+1}x_{i+1}\|^2\nonumber\\
&=\sum_{i=1}^m\frac{1}{2\|L_i\|^2}\|L_i\|^2
\|L_ix_i-L_{i+1}x_{i+1}\|^2\nonumber\\
&\geq\chi^{-1}\sum_{i=1}^m\|L_i^*(L_ix_i-L_{i+1}x_{i+1})\|^2
\nonumber\\
&=\chi^{-1}\sum_{i=1}^m\|\nabli{i}\boldsymbol{g}_i(x_1,
\ldots,x_m)\|^2,
\end{align}
and hence \eqref{e:Bcoco} and \eqref{e:Bmon} hold. In addition,
\eqref{e:QCprox} yields \eqref{e:exist2}.
Altogether, since 
\eqref{e:FB} reduces to \eqref{e:FBcyclic}, the 
result follows from Theorem~\ref{t:main0}.
\end{proof}

We present below an application of Proposition~\ref{p:cyclic}
to cyclic proximation problems and, in particular,
to cyclic projection problems.

\begin{example}
We apply Example~\ref{prob:cyclic} to cyclic evaluations of 
proximity operators. For every $i\in\{1,\ldots,m\}$, let 
$\HH_i=\HH$, let $f_i\in\Gamma_0(\HH)$, let $L_i=\Id$,
and set $\boldsymbol{f}\colon(x_i)_{1\leq i\leq m}
\mapsto\sum_{i=1}^mf_i(x_i)$. In view of \eqref{e:prox}, 
Example~\ref{prob:cyclic} reduces to finding
$x_1\in\HH,\ldots,x_m\in\HH$ such that
\begin{equation}
\label{e:cycle}
\begin{cases}
x_1=\prox_{f_1}x_2\\
x_2=\prox_{f_2}x_3\\
\hspace{0.7cm}\vdots\\
x_m=\prox_{f_m}x_1.
\end{cases} 
\end{equation}
It is assumed that \eqref{e:cycle} has at least one solution. 
Since $\prox_{\boldsymbol{f}}\colon(x_i)_{1\leq i
\leq m}\mapsto(\prox_{f_i}x_i)_{1\leq i\leq m}$
\cite[Proposition~23.30]{Livre1}, 
\eqref{e:FBcyclic} becomes (we set errors to zero for simplicity)
\begin{equation}
\label{e:FBcyclicP}
(\forall n\in\NN)\quad
\left\lfloor 
\begin{array}{l}
\text{For}\:\:i=1,\ldots,m\\
\lfloor\:x_{i,n+1}=\prox_{\gamma_nf_i}((1-\gamma_n)x_{i,n}+\gamma_nx_{i+1,n}), 
\end{array}
\right.
\end{equation}
where $(x_{i,0})_{1\leq i\leq m}\in\HH^m$ and $(\gamma_n)_{n\in\NN}$ 
is a sequence in $\left[\varepsilon,1-\varepsilon\right]$ for some
arbitrary $\varepsilon\in\left]0,1/2\right[$. Proposition~\ref{p:cyclic} 
asserts that the sequences $(x_{1,n})_{n\in\NN},\ldots,
(x_{m,n})_{n\in\NN}$ generated by \eqref{e:FBcyclicP} converge weakly to
points $\overline{x}_1\in\HH,\ldots,\overline{x}_m\in\HH$, respectively, 
such that $(\overline{x}_1,\ldots,\overline{x}_m)$ is a solution to 
\eqref{e:cycle}.

In the particular case when, for every $i\in\{1,\ldots,m\}$, 
$f_i=\iota_{C_i}$, a solution of \eqref{e:cycle} represents a cycle 
of points in $C_1,\ldots,C_m$. It can be interpreted as a Nash 
equilibrium of the game in which, for every $i\in\{1,\ldots,m\}$, 
the strategies of player $i$, belong to $C_i$ and its penalty 
function is $(x_i)_{1\leq i\leq m}\mapsto\|x_i-x_{i+1}\|^2$, 
that is, player $i$ wants to have strategies as close as possible 
to the strategies of player $i+1$. Such schemes go back at least to
\cite{Gubi67}. It has recently been proved \cite{Bacc11} that, in 
this case, if $m>2$, the cycles are not minimizers 
of any potential, from which we infer that this problem cannot 
be reduced to a potential game. Note that \eqref{e:FBcyclicP} 
becomes
\begin{equation}
\label{e:FBcyclicP2}
(\forall n\in\NN)\quad
\left\lfloor 
\begin{array}{l}
\text{For}\:\:i=1,\ldots,m\\
\lfloor\:x_{i,n+1}=P_{C_i}((1-\gamma_n)x_{i,n}+\gamma_nx_{i+1,n})
\end{array}
\right.
\end{equation}
and the sequences thus generated $(x_{1,n})_{n\in\NN},\ldots,
(x_{m,n})_{n\in\NN}$ converge weakly to
points $\overline{x}_1\in\HH,\ldots,\overline{x}_m\in\HH$, respectively, 
such that $(\overline{x}_1,\ldots,\overline{x}_m)$ is a cycle. 
The existence of cycles
has been proved in \cite{Gubi67} when one of the sets 
$C_1,\ldots,C_m$ is bounded. Thus, \eqref{e:FBcyclicP2} is an 
alternative parallel algorithm to the method of successive 
projections \cite{Gubi67}.
\end{example}

\end{document}